\documentclass[a4paper,11pt]{article}
\usepackage{graphicx}
\usepackage{amsmath}
\usepackage{amsfonts}
\usepackage{tikz-cd}
\usepackage{booktabs}
\usepackage{mathrsfs}
\usepackage{multirow}
\usepackage{multicol}
\usepackage{float}
\usepackage{caption}
\usepackage{amssymb}
\usepackage{xcolor}
\usepackage{hyperref}
\usepackage{faktor}
\usepackage[
    backend=biber,
    style=numeric,
    maxbibnames=999
]{biblatex}

\DeclareFieldFormat[
    article,
    inbook,
    incollection,
    inproceedings,
    patent,
    thesis,
    unpublished
]{title}{#1\isdot}

\newenvironment{proof}[1][Proof]{\noindent\textbf{#1.} }{\
\rule{0.5em}{0.5em}}

\newtheorem{theorem}{Theorem}[section]

\newtheorem{algorithm}[theorem]{Algorithm}

\usepackage{algorithm}
\usepackage{algpseudocode}

\newtheorem{corollary}[theorem]{Corollary}

\newtheorem{definition}[theorem]{Definition}
\newtheorem{example}[theorem]{Example}

\newtheorem{lemma}[theorem]{Lemma}
\newtheorem{notation}[theorem]{Notation}

\newtheorem{proposition}[theorem]{Proposition}

\newtheorem{remark}[theorem]{Remark}

\newenvironment{theoremA}
  {\par\medskip\noindent\textbf{Theorem.}\itshape}
  {\par\medskip}

\newcommand\K{\mathbb K}

\newcommand\N{\mathbb N}

\newcommand\bu{\mathbf u}

\title{New bounds for the support of input-output equations in differential-algebraic systems
\footnote{Partially supported by Universidad de Buenos Aires (UBACYT 20020250100090BA), Argentina.}}
\author{Gabriela Jeronimo{$^{1, 2, 3}$} \and Leonardo Lanciano{$^{1}$}}

\date{}

\begin{document}

\maketitle

\begin{minipage}{14cm}
\noindent {\small $^1$ Universidad de Buenos Aires, Facultad de Ciencias Exactas y Naturales,  Departamento de Matem\'atica. Buenos Aires, Argentina.}

\noindent {\small $^2$  CONICET -- Universidad de Buenos Aires, Instituto de Investigaciones Matem\'aticas ``Luis A. Santal\'o'' (IMAS). Buenos Aires, Argentina.}

\noindent {\small $^3$ Universidad de Buenos Aires, Ciclo B\'asico Com\'un,  Departamento de Ciencias Exactas. Buenos Aires, Argentina.}
\end{minipage}

\bigskip

\begin{abstract}
Given a polynomial dynamical system $\mathbf{x}'=\mathbf{f}(\mathbf{x},\mathbf{u})$ together with an observation function $y=g(\mathbf{x},\mathbf{u})$, where $\mathbf{x}=(x_1,\ldots,x_n)$, $\mathbf{u}=(u_1,\ldots,u_m)$ and $y$ are differential variables, and $\mathbf{f}=(f_1,\ldots,f_n)$, $g$ are polynomials with coefficients in a differential field, we study the problem of determining a minimal polynomial differential equation satisfied by the inputs $\mathbf{u}$ and the output $y$ which follows as a differential consequence of the system. 

We provide a characterization of a finite superset of the set of monomials appearing with non-zero
coefficients in this input-output equation. Specifically, we establish an upper bound for the degree of the
minimal polynomial and a family of inequalities that define a polytope containing its Newton polytope.
These results extend recent work by Mukhina and Pogudin for systems with constant parameters, and enable the use
of evaluation-interpolation techniques for the efficient computation of such eliminant polynomials.
\end{abstract}

{\small \textbf{Keywords:} Differential elimination, Polynomial dynamical systems, Input-output equations.}

\section{Introduction}

The theory of differential algebra initiated by Ritt in the 1930s and followed by Kolchin (see \cite{Ritt1932}, \cite{ritt1950differential}, \cite{kolchin1973differential}) provides a natural framework for studying systems of differential algebraic equations, that is, systems where the equations are given by polynomials in the unknown functions and their derivatives. Since then, and especially in the last decades, substantial progress has been made in understanding structural, quantitative and algorithmic aspects of such systems  from a new algebraic point of view (see, for instance, \cite{Boulier-etal1995}, \cite{Hubert2003}, \cite{DJS2006}, \cite{GKO2016} and the references therein).

A fundamental problem in this area is \emph{differential elimination}: given a system of differential algebraic equations  the goal is to determine differential polynomials that involve only a subset of the variables and are differential-algebraic consequences of the system (more precisely, polynomials lying in a differential ideal generated by the system). Despite important advances following different approaches (see, for instance, \cite{Grigorev1989}, \cite{CarraFerro2007}, \cite{DALFONSO2014588},  \cite{Li-etal2015}, \cite{Rueda2016}, \cite{10.1093/imrn/rnaa302}, \cite{doi:10.1137/22M1469067}), general effective methods for differential elimination remain limited, mainly due to the rapid growth in size of the polynomials involved.

Motivated by applications, including models in systems biology, chemical reaction networks, and engineering systems with inputs and outputs (see, for instance, \cite{Boulier+2007+109+138},  \cite{Fliess1995Implicit}, \cite{Sedoglavic2002}), in this paper we consider dynamical systems of the form:
\begin{equation}\label{ecuacion sistema introduccion}
\Sigma :=
\begin{cases}
\mathbf{x}' = \mathbf{f}(\mathbf{x}, \mathbf{u}), \\[4pt]
y = g(\mathbf{x}, \mathbf{u}),
\end{cases}
\end{equation}
where $\mathbf{x} = (x_1,\ldots,x_n)$, $\mathbf{u} = (u_1,\ldots,u_m)$ are differential indeterminates representing the state and input variables, 
respectively, $y$ is an observable output, and $\mathbf{f} =(f_1,\ldots,f_n), g $ are polynomials with coefficients in a differential field $\mathbb{K}$ of zero characteristic.

We address a specific instance of the effective differential elimination problem: for a system as \eqref{ecuacion sistema introduccion}, 
we aim to obtain a minimal polynomial in the variables $\mathbf{u}, y$, and their successive derivatives that follows as a differential consequence 
of the system, which we denote  by $\operatorname{Elim}(\Sigma,\mathbf{x})$ (see Definition \ref{definición eliminante}).
This problem arises naturally in situations where only partial information about the state variables $\mathbf{x}$ can be measured, while the inputs $\mathbf{u}$ and the output $y$ are accessible through measurements. In these situations, determining relations between $\mathbf{u}$ and $y$ that do not involve the state variables $\mathbf{x}$ and every solution of the system should satisfy enables a better understanding of the system and its solutions.  Such \emph{input-output equations} play a central role in applications, for instance in parameter identifiability (see, for instance, \cite{MRS2018}, \cite{Ovchinnikov2023}, \cite{JS2023} and the references therein).

Recently, in \cite{10.1007/978-3-032-09645-6_15} and \cite{mukhina2025projectingdynamicalsystemssupport}, the problem has been studied in the non-parametric setting and for systems with constant parameters.  The authors prove bounds on the \emph{support} of the eliminant polynomial --linear inequalities that the vectors of exponents of its monomials satisfy-- and design an evaluation-interpolation procedure to compute the eliminant from the candidate support.

The main theoretical contribution of this work is to provide explicit bounds
for the support of the eliminant polynomial  $\operatorname{Elim}(\Sigma,\mathbf{x})$ in the setting where the input
parameters $\mathbf{u}$ are not assumed to be constant, but are allowed to vary freely, reflecting the presence of time-dependent external signals or controls.
Our main result is the following:

\medskip

\begin{theoremA}
Let $g,f_1,\ldots,f_n \in \mathbb{K}[\mathbf{x},\mathbf{u}]$ and  $\Sigma$ be the dynamical system introduced in \eqref{def: difsystem}. Let $\nu$ be the order of $\operatorname{Elim}(\Sigma,\mathbf{x})$. 
Set $d:= \deg(g)$, $d_\mathbf{x}:= \deg_\mathbf{x}(g)$, $d_\mathbf{u} := \deg_\mathbf{u}(g)$, 
$D := \max_{1 \leq i \leq n} \deg(f_i)$, $D_{\mathbf{x}} := \max_{1 \leq i \leq n} \deg_{\mathbf{x}}(f_i)$, $ D_{\mathbf{u}} := \max_{1 \leq i \leq n} \deg_{\mathbf{u}}(f_i).$

Assume $D_\mathbf{x}, d_\mathbf{x} > 0$ and let $\Delta := \displaystyle \prod_{i=0}^\nu \bigl(d + i(D-1)\bigr)$ and $\Delta_\mathbf{x}:= \displaystyle \prod_{i=0}^\nu \bigl(d_\mathbf{x} + i(D_{\mathbf{x}}-1)\bigr)$.
Then, for every monomial $\displaystyle\Big( \prod_{k=0}^\nu \prod_{j=1}^m (u_j^{(k)})^{\ell_{j,k}} \Big)\prod_{i=0}^\nu (y^{(i)})^{e_i}$
appearing in $\operatorname{Elim}(\Sigma,\mathbf{x})$, we have:
\begin{enumerate}
\item $\displaystyle \sum_{k=0}^\nu \sum_{j=1}^m \ell_{j,k} + \sum_{i=0}^\nu \bigl(d + i(D-1)\bigr) e_i \;\le\; \Delta,$

\item $\displaystyle  \sum_{i=0}^\nu \bigl(d_\mathbf{x} + i(D_{\mathbf{x}}-1)\bigr) e_i \;\le\; \Delta_\mathbf{x},$

\item $\displaystyle \sum\limits_{k=0}^\nu \sum\limits_{j=1}^m \ell_{j,k} + \sum\limits_{i=0}^\nu \bigl(d_\mathbf{u} + iD_\mathbf{u}\bigr) e_i 
\;\le\;
\sum\limits_{l=0}^\nu \frac{(d_\mathbf{u}+l D_\mathbf{u})\Delta_\mathbf{x}}{d_\mathbf{x} + l(D_\mathbf{x}-1)},$

\item $\displaystyle \sum\limits_{j=1}^m \ell_{j,r}+ \sum\limits_{i=r}^\nu \left \lfloor\frac{i}{r} \right \rfloor e_i
\;\le\; 
\sum\limits_{l=r}^\nu \frac{\left \lfloor{l}/{r} \right \rfloor \Delta_\mathbf{x}}{d_\mathbf{x}+l(D_\mathbf{x}-1)}, $ for $1\le r \le \nu.$ \label{eq:newbounds}
\end{enumerate}
\end{theoremA}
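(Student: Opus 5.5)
Our plan is to reduce the statement to an algebraic elimination problem and then bound the Newton polytope of the eliminant with a multihomogeneous or weighted Perron-type argument, as in the constant-parameter case of Mukhina and Pogudin. Write $g_0:=g$ and $g_{i+1}:=\partial g_i$, where $\partial$ is the derivation of $\mathbb{K}[\mathbf{x},\mathbf{u}^{(\infty)}]$ that acts on $\mathbf{x}$ through $x_k'\mapsto f_k$, differentiates coefficients in $\K$, and sends $u_j^{(k)}\mapsto u_j^{(k+1)}$. Then $g_i\in\K[\mathbf{x},\mathbf{u},\dots,\mathbf{u}^{(i)}]$, and $\operatorname{Elim}(\Sigma,\mathbf{x})$ of order $\nu$ is, up to a factor in $\K$, the minimal polynomial $P$ of the algebraic relation among $g_0,\dots,g_\nu$ over $\K(\mathbf{u}^{(\le\nu)})$. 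In other words, $P(\mathbf{u}^{(\le \nu)}, y_0,\dots,y_\nu)$ generates the kernel of $y_i\mapsto g_i$. Minimality and order $\nu$ mean that $g_0,\dots,g_{\nu-1}$ are algebraically independent over $\K(\mathbf{u}^{(\le\nu)})$ and that the relation is irreducible. I would take these facts from the earlier section defining $\operatorname{Elim}$.

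Next, we bound degrees. By induction, $\deg g_i\le d+i(D-1)$ and $\deg_{\mathbf{x}} g_i\le d_\mathbf{x}+i(D_\mathbf{x}-1)$, since differentiating a monomial in $\mathbf{x}$ replaces one $x_k$ by $f_k$. Similarly $\deg_{\mathbf{u}} g_i\le d_\mathbf{u}+iD_\mathbf{u}$, because the $\mathbf{u}$-degree grows by at most $D_\mathbf{u}$ per step. The new ingredient for item 4 is a weight. Give $u_j^{(k)}$ weight $\lfloor k/r\rfloor$, or better, give the $u^{(r)}$ variables weight $1$ and the rest weight $0$. One checks that $g_i$ has degree at most $\lfloor i/r\rfloor$ in the variables $\mathbf{u}^{(r)},\mathbf{u}^{(2r)},\dots$ in the appropriate sense. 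More precisely, $g_i$ is affine in $\mathbf{u}^{(i)}$ and has controlled degree in $\mathbf{u}^{(r)}$. I would verify $\deg_{\mathbf{u}^{(r)}} g_i\le\lfloor i/r\rfloor$ by tracking how often a derivative can hit the same $u$-factor. This bookkeeping is the step I expect to be the main obstacle.

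For the Newton polytope bounds, we use the Perron-type theorem for the minimal polynomial of $\nu+1$ polynomials in $\nu$ essentially free variables. After specializing the $\mathbf{u}$'s generically, there are $n$ variables $\mathbf{x}$, but the image has dimension $\nu$, so we restrict to a generic $\nu$-dimensional affine slice of $\mathbf{x}$-space. For polynomials $h_0,\dots,h_\nu$ with $\deg h_i\le \delta_i$, the relation $P$ satisfies $\sum_i \delta_i e_i\le\prod_i\delta_i$ (Perron's bound via weighted degree). This gives item 2 directly, using $\delta_i=d_\mathbf{x}+i(D_\mathbf{x}-1)$. For item 1, we treat the $\mathbf{u}$'s as extra homogenizing variables: we apply the weighted Perron bound to $\mathbf{x}$ and $\mathbf{u}$ together, or equivalently use the arithmetic-degree version in which coefficient degrees in the parameters are bounded by $\sum_l \deg_{\text{par}}(h_l)\prod_{i\ne l}\delta_i$. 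Concretely, the eliminant's degree in the parameters is bounded by $\sum_l \deg_{\mathbf{u}}(g_l)\,\Delta_\mathbf{x}/\delta_l$, read off from the intersection-theoretic bound on the Chow form. Combining this with the total weighted degree gives items 1 and 3. Applying the same argument with the grading by $\deg_{\mathbf{u}^{(r)}}$ replaces $\deg_\mathbf{u} g_l$ by $\lfloor l/r\rfloor$, and this yields item 4. Here only $l\ge r$ contribute, since $g_l$ is free of $\mathbf{u}^{(r)}$ for $l<r$.

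Finally, we combine the weighted inequality on the $y$-side with the coefficient bound. To do so, we use a bihomogeneous Bézout argument on the graph $\{(\mathbf{x},y): y_i=g_i\}$ intersected with generic hyperplanes. The weighted $y$-degree plus the parameter degree of each monomial of $P$ is bounded by the multidegree of the graph, which is precisely $\sum_l w_l\Delta_\mathbf{x}/\delta_l$ for weight $w$. This uniform framework (a weighted mixed-degree bound for the Chow/eliminant form) is how I would produce all four inequalities at once.
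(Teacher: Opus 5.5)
Your architecture matches the paper's. You use the same degree bounds for $g_i=\mathcal{E}(g^{(i)})$ and feed them into the D'Andrea--Krick--Sombra Perron theorem with heights, which you can simply cite as the paper does rather than rederive via the graph/B\'ezout sketch. Slicing by a generic $\nu$-dimensional affine subspace is a valid substitute for the paper's device of adjoining suitable coordinates $x_i$ as extra degree-one components. You need the slice to have coefficients in $K$, which is possible because the admissible slices form a nonempty Zariski open set and $K$ is infinite, so that no $\mathbf{u}$-degree is created. You also need characteristic zero to see that the restricted map is still dominant onto the same hypersurface.

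What is missing is the step you flag yourself. Item 4 rests entirely on $\deg_{\mathbf{u}^{(r)}}g_i\le\lfloor i/r\rfloor$, and neither weight you propose closes an induction. One differentiation step can raise $\deg_{\mathbf{u}^{(r)}}$, or $\sum_{j,k}\lfloor k/r\rfloor\beta_{j,k}$ (with $\beta_{j,k}$ the exponent of $u_j^{(k)}$), by one, so tracking either only gives $i-r+1$. The invariant that works is the order weight: give $u_j^{(k)}$ weight $k$ and each $x_s$ weight $0$. Passing from $g_i$ to $g_{i+1}$:
differentiating coefficients keeps the monomials;
replacing a factor $x_s$ by $f_s(\mathbf{x},\mathbf{u})$ only brings in weight-zero factors;
replacing one $u_j^{(k)}$ by $u_j^{(k+1)}$ adds exactly one.
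Hence every monomial of $g_i$ has weight at most $i$. Each factor from $\mathbf{u}^{(r)}$ contributes $r$, so $r\deg_{\mathbf{u}^{(r)}}g_i\le i$. This is your ``how often a derivative can hit the same factor'' made precise; the paper extracts it from Fa\`a di Bruno's formula.

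Second, your third paragraph misdescribes how items 1 and 3 arise. Item 1 is not equivalent to the parametric (height) bound and is not implied by items 2--4. In the paper's worked example ($\nu=2$, $d=d_{\mathbf{u}}=2$, $d_{\mathbf{x}}=D=D_{\mathbf{x}}=D_{\mathbf{u}}=1$), the exponent vector with $\ell_{1,0}=9$ and all other entries $0$ satisfies items 2--4 but violates item 1. Item 1 must come, as in your first option and in the paper, from the plain Perron bound over $K$ for $(\mathbf{u}^{(\le\nu)},\mathbf{x})\mapsto(\mathbf{u}^{(\le\nu)},g_0,\dots,g_\nu)$, with the $u_j^{(k)}$ as degree-one components and total degrees $d+i(D-1)$ for the $g_i$.

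Likewise, items 3 and 4 are not obtained by combining a bound on the coefficient degree with a separate bound on the weighted $y$-degree. Each is the single joint inequality $\deg_{\mathbf{t}}(\alpha_I)+\sum_i b_ie_i\le\Delta_{\mathbf{x}}\sum_{l} b_l/(d_{\mathbf{x}}+l(D_{\mathbf{x}}-1))$. For item 3 take $\mathbf{t}=\mathbf{u}^{(\le\nu)}$ and $b_i=d_{\mathbf{u}}+iD_{\mathbf{u}}$. For item 4 take $\mathbf{t}=\mathbf{u}^{(r)}$, with the other $\mathbf{u}^{(k)}$ moved into the base field, and $b_i=\lfloor i/r\rfloor$. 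This is what your final paragraph states. With these two points repaired, your outline is the paper's proof.
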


This result can be regarded as a generalization of \cite[Theorem~1]{10.1007/978-3-032-09645-6_15}.  The first inequalities are  natural extensions of that result to the setting of differential parameters. In addition, the bounds corresponding to inequalities of type \ref{eq:newbounds} are new and do not arise as generalizations of previously known results.
When restricted to the constant parameter case $(\mathbf{u}'=0)$, our result yields slightly sharper bounds than those obtained in the existing literature and provides a natural candidate for the Newton polytope of the eliminant in sufficiently generic situations.

Our approach to proving the  new bounds for the support of $\operatorname{Elim}(\Sigma,\mathbf{x})$ is based on  an analogue of the classical Perron's elimination theorem (see Proposition \ref{Coro: Dandrea} below), which is in turn a generalization of \cite[Theorem 3.1]{D2013} (see also \cite[Theorem 3.3]{Jelonek2005}). This approach offers a unified framework that enables us to recover most of the bounds available in the literature
and yields new ones that have no counterpart in the constant-parameter case. 

From our main Theorem, we obtain a candidate support for  $\operatorname{Elim}(\Sigma,\mathbf{x})$ in the differential parameter framework. This enables us to extend the evaluation-interpolation procedure proposed by Mukhina and Pogudin for computing eliminant polynomials to this setting, and perform numerical experiments to assess the quality of the bounds obtained. 

The paper is organized as follows. Section~\ref{sec} introduces the basic notation, definitions, and tools used throughout the paper. In Section~\ref{sec 2}, we present our object of interest, $\operatorname{Elim}(\Sigma,\mathbf{x})$, we then prove the main result of the paper,  compare it with previous work (Subsect.~\ref{sec:main bounds}), and further analyze the case where the output depends on a single state variable,  obtaining refined bounds  (Subsect.~\ref{subsec: output}).  In Section~\ref{sec:experiments}, we present experimental results illustrating the sharpness of our bounds. Finally, Section \ref{sec:conclusions} summarizes the main outcomes and outlines open questions for future research.

\paragraph{Acknowledgements.}
The authors are grateful to Pablo Solernó for stimulating discussions and valuable insights, and to Yulia Mukhina for her useful suggestions regarding the computational experiments.

\section{Preliminaries}\label{sec}
\subsection{Basic definitions and notation}\label{subsect: Basic Def}

Let $(\mathbb{K}, \partial)$ be a differential field of characteristic zero (for instance, $\mathbb{K}=\mathbb{C}$ with $\partial \equiv 0$, or $\mathbb{K} = \mathbb{C}(t)$ with the standard derivation induced by $\partial(t) = 1$). For $n, m \in \mathbb{N}$ we fix differential indeterminates
$
\mathbf{x} = (x_1,\ldots,x_n)$,
$\mathbf{u} = (u_1,\ldots,u_m)$.
We denote the $k$-th successive derivatives of the variables $\mathbf{x}$ by $\mathbf{x}^{(k)} := (x_1^{(k)},\ldots,x_n^{(k)}),$ and similarly for $\mathbf{u}$ (the first derivatives may also be written as $'$). 
The \emph{ring of differential polynomials} in $\mathbf{x}$ and $\mathbf{u}$ with coefficients in $\mathbb{K}$ is defined as the polynomial ring generated by all derivatives of the variables
$\mathbf{x}$ and $\mathbf{u}$,
\[
\mathbb{K}\{\mathbf{x},\mathbf{u}\}:=\mathbb{K}\!\big[\mathbf{x}^{(k)},\, \mathbf{u}^{(k)} : k \in \mathbb{N}_0 \big],
\]
where we extend the derivation of $\mathbb{K}$ by setting $\partial (x_i^{(k)}) = x_i^{(k+1)}$ and $\partial( u_j^{(k)}) = u_j^{(k+1)}$ for every $1\le i \le n$, $1\le j \le m$ and $k\in \mathbb{N}_0$.
For $r,s \in \mathbb{N}$, we write
\[
\mathbf{x}^{(\leq r)} := \big(x_i^{(k)} : 0 \leq k \leq r,\ 1 \le i \le n \big),
\qquad
\mathbf{u}^{(\leq s)} := \big(u_j^{(k)} : 0 \leq k \leq s,\ 1 \le j \le m \big).
\]

The \emph{order  in the variables $\mathbf{x}$} of a differential polynomial $F\in \mathbb{K}\{\mathbf{x}, \mathbf{u}\}$, denoted by $\operatorname{ord}_{\mathbf{x}}(F)$, is
the largest $r\in  \N_0$ such that a variable $x_i^{(r)}$ appears in $F$ (or $-\infty$ if it does not depend on these variables); similarly, we define the order of $F$ in $\mathbf{u}$. Let
$$F=\sum_{I,J}a_{I,J}  (\mathbf{x}^{(\leq r)})^I  (\mathbf{u}^{(\leq s)})^J
\in \mathbb{K}[\mathbf{x}^{(\leq r)},\mathbf{u}^{(\leq s)}],$$
where $I\in\mathbb{N}_0^{n(r+1)}$ and
$J\in\mathbb{N}_0^{m(s+1)}$. 
The total derivative of $F$ in $\mathbb{K}\{\mathbf{x}, \mathbf{u}\}$, denoted by $F'$, is 
    \begin{align*}
    F'
    =
    \sum_{I,J}\partial(a_{I,J})
    (\mathbf{x}^{(\leq r)})^I
    (\mathbf{u}^{(\leq s)})^J
    +\sum_{k=0}^{r}
    \nabla_{\mathbf{x}^{(k)}}(F)\cdot\mathbf{x}^{(k+1)}
    +\sum_{l=0}^{s}
    \nabla_{\mathbf{u}^{(k)}}(F)\cdot\mathbf{u}^{(k+1)}.
    \end{align*}
    Here, the first sum accounts for the action of $\partial$ on the
    coefficients of $F$, which we will denote $\partial_{\text{coeff}}(F)$ in the sequel, and $\nabla_{\mathbf{x}^{(k)}}(F)$ and $\nabla_{\mathbf{u}^{(k)}}(F)$
stand for the vectors of partial derivatives of $F$ with respect to the variables $\mathbf{x}^{(k)}$ and $\mathbf{u}^{(k)}$ respectively.
For $	q\ge 2$,  we write $F^{(q)}$ for the $q$-th total derivative of $F$.

We denote by
    $\deg_{\mathbf{x}^{(\leq r)},\mathbf{u}^{(\leq s)}}(F)$
    the total degree of $F$ in the variables
    $\mathbf{x}^{(\leq r)}$ and $\mathbf{u}^{(\leq s)}$.

The support of $F$ is
    $$\operatorname{supp}(F):=
   \left\{(I,J)\in
    \mathbb{N}_0^{n(r+1)}\times\mathbb{N}_0^{m(s+1)}
    \mid a_{I,J}\neq 0\right\},$$
and its \emph{Newton polytope}, denoted by $\operatorname{Newt}(F)$,
    is the convex hull of $\operatorname{supp}(F)$ in
    $\mathbb{R}^{n(r+1)+m(s+1)}$.

\bigskip

Given a finite family of differential polynomials $
H_1,\ldots,H_\beta \in \mathbb{K}\{\mathbf{x},\mathbf{u}\},
$
we write $[H_1,\ldots,H_\beta]$ to denote the \emph{differential ideal} they generate, that is,
the smallest ideal of $\mathbb{K}\{\mathbf{x},\mathbf{u}\}$ containing $H_1,\ldots,H_\beta$ and
closed under derivation.

\subsection{Some auxiliary facts from algebraic geometry}

Fix \(N \in \mathbb{N}\), and let \(K\) be an algebraic closure of \(\mathbb{K}\). We denote by \(\mathbb{A}^N_K := K^N\) the affine \(N\)-space over \(K\), endowed with the Zariski topology. Unless explicitly stated otherwise, all affine spaces \(\mathbb{A}^N\) are understood as \(\mathbb{A}^N_K\). Given an algebraic variety \(V \subseteq \mathbb{A}^N\), we denote by \(I(V)\) the ideal of all polynomials in $N$ variables with coefficients in $K$ that vanish on \(V\).

\medskip

Now we briefly recall the definition of the \emph{geometric degree} of an algebraic variety and its basic properties. For further results, see, for instance, \cite{mumford1976algebraic,harris1992algebraic} in the case of projective varieties or \cite{Heintz1983} for affine (and constructible) sets.
We also recall the notion of \emph{mixed heights} introduced in \cite{D2013}. These ingredients allow us to prove a modified version of the analogue of Perron's theorem given in \cite[Theorem~3.1]{D2013}, which will be a key ingredient in our approach (see Proposition \ref{Coro: Dandrea} below).

\begin{definition}
Let $V \subseteq \mathbb{A}^N_K$ be an irreducible algebraic variety with $\dim(V) = r$. The \emph{geometric degree} of $V$, denoted by $\deg(V)$, is defined as
\[
\deg(V) := \#(V \cap L),
\]
where $L$ is a generic linear algebraic variety of dimension $N-r$. For an arbitrary algebraic variety $V$, its geometric degree is defined as the sum of the degrees of its irreducible components.
\end{definition}
For a proof that this invariant is well defined, see \cite[Lemma~1, Proposition~1]{Heintz1983}. It is easy to see that the degree of any linear variety is $1$ and the degree of a hypersurface agrees with the degree of the square-free equation defining it.

\bigskip

Let $\mathbf{t}=(t_1,\ldots,t_s)$ be a set of variables and let
$V \subseteq \mathbb{A}^N_{\overline{K(\mathbf{t})}}$ be an irreducible $K(\mathbf{t})$-variety.
The geometric degree of $V$ measures its complexity with respect to the variables $\mathbf{x}$.
In effective algebraic geometry, one is often also interested in measuring the complexity of $V$ with respect to the parameter variables $\mathbf{t}$, which leads to the notion of \emph{height}.  In our setting, the height of $V$, denoted $h(V)$, can be defined as the $\mathbf{t}$-degree of the Chow form of its closure in $\mathbb{P}^N(\overline{K(\mathbf{t})})$.

It is immediate that the height of the affine space \(\mathbb{A}^N\) is zero. Moreover, for a $K(\mathbf{t})$-hypersurface, its height coincides with the total degree in the parameter variables \(\mathbf{t}\) of its primitive square-free defining polynomial with coefficients in $K[\mathbf{t}]$.  We refer the reader to \cite[Sect. 2.1]{D2013} for alternative characterizations and basic properties of the height of varieties over a function field.

If the parameters  are partitioned into blocks
$\mathbf{t}=(\mathbf{t}_1,\ldots,\mathbf{t}_k)$, where $\mathbf{t}_i=(t_{i,1},\dots, t_{i,s_i})$ for $1\le i \le k$, the height of $V \subseteq \mathbb{A}^N_{\overline{K(\mathbf{t})}}$  with respect to the block $\mathbf{t}_i$ is  defined by considering the field ${K}_{\mathbf{t}_i}:=K(\mathbf{t}_j \mid j \neq i)$ and $V$ as a ${K}_{\mathbf{t}_i}(\mathbf{t}_i)$-variety.

\bigskip

We now present a generalization of Perron's elimination theorem (see \cite[Theorem 3.3]{Jelonek2005}), which follows essentially from 
\cite[Theorem 3.1 and Corollary 3.14]{D2013} and yields a similar conclusion under hypotheses better adapted to our setting.
\begin{proposition}\label{Coro: Dandrea}
 Let $\mathbf{t}_i=(t_{i,1}, \dots, t_{i,s_i})$, $1\le i \le k$, be blocks of variables and $\mathbf{t}=(\mathbf{t}_1,\ldots,\mathbf{t}_k)$, and let
$V \subseteq \mathbb{A}^N$ be an irreducible $K(\mathbf{t})$-variety. Let $q_0,\ldots,q_{\nu} \in K[\mathbf{t},\mathbf{x}] \setminus K[\mathbf{t}]$ and assume that the Zariski closure of the image of the map
\[
\varphi : V \rightarrow \mathbb{A}^{\nu+1}, \qquad
\mathbf{x} \mapsto (q_0(\mathbf{x}),\ldots,q_{\nu}(\mathbf{x})),
\]
is a hypersurface. Set $d_j \geq \deg_{\mathbf{x}}(q_j)$ and $b_{i,j} \geq \deg_{\mathbf{t}_i}(q_j)$ for $0\le j\le \nu$, $1\le i\le k$, and write
$\mathbf{d}=(d_0,\ldots,d_{\nu})$ and $\mathbf{b}_i=(b_{i,0},\ldots,b_{i,\nu})$ for $1\le i \le k$.  Then, there exists a primitive irreducible polynomial
\[
E = \sum_{I\in\mathbb{N}_0^{\nu+1}} \alpha_I\, \mathbf{Y}^I \in K[\mathbf{t}][Y_0,\ldots,Y_\nu]
\]
defining $\overline{\operatorname{Im}(\varphi)}$ such that, for all $I\in\operatorname{supp}(E)$ the following inequalities hold:
\begin{itemize}
\item $\displaystyle \langle \mathbf{d}, I \rangle \;\le\; \deg(V)  \Big ( \prod_{j=0}^{\nu} d_j  \Big),$
\item $ \displaystyle \deg_{\mathbf{t}_i}(\alpha_I)+\langle \mathbf{b}_i, I\rangle
\;\le\; \Big ( \prod_{j=0}^{\nu} d_j \Big) \left ( h_{\mathbf{t}_i}(V) + \deg(V) \Big ( \sum_{\ell=0}^{\nu} \frac{b_{i,\ell}}{d_\ell} \Big) \right )
\textit{ for } 1\le i \le k.$
\end{itemize}
\end{proposition}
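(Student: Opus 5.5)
We plan to deduce Proposition \ref{Coro: Dandrea} from D'Andrea's analogue of Perron's theorem \cite[Theorem 3.1]{D2013}, together with the multi-block height estimates of \cite[Corollary 3.14]{D2013}. The first step is to reduce to the situation where $V$ is replaced by its graph. Put $W:=\{(\mathbf{x},\mathbf{y})\in V\times\mathbb{A}^{\nu+1} : y_j=q_j(\mathbf{x}),\ 0\le j\le\nu\}$. Then $\overline{\operatorname{Im}(\varphi)}$ is the closure of the projection of $W$ to the $\mathbf{y}$-coordinates. By hypothesis this closure is an irreducible hypersurface, so it is defined, up to a unit of $K(\mathbf{t})$, by a unique irreducible polynomial. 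Clearing denominators and removing the content in $K[\mathbf{t}]$ gives a primitive irreducible $E\in K[\mathbf{t}][Y_0,\ldots,Y_\nu]$. It remains to bound its support and the $\mathbf{t}_i$-degrees of its coefficients.

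For the first inequality we introduce weights. Assign $Y_j$ the weight $d_j$ and compute the weighted degree of $E$ by intersection theory. We compose $\varphi$ with generic linear data: take a generic point $\boldsymbol{\lambda}$ and consider the one-parameter family $Y_j=\lambda_j T^{d_j}+\ldots$. The weighted degree $\max_{I\in\operatorname{supp}(E)}\langle\mathbf{d},I\rangle$ equals the number of intersection points of $\overline{\operatorname{Im}(\varphi)}$ with a generic weighted curve, multiplied by the degree of $\varphi$. This count is in turn bounded by the number of points of $V$ cut out by the equations $q_j(\mathbf{x})=\lambda_j s^{d_j}$. By the Bézout inequality this number is at most $\deg(V)\prod_j d_j$. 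This is exactly the statement of \cite[Theorem 3.1]{D2013}, so we will quote it, checking only that its hypotheses hold: the $q_j$ are nonconstant in $\mathbf{x}$, and the image is a hypersurface. Using the bounds $d_j\ge\deg_{\mathbf{x}}(q_j)$ instead of the exact degrees is harmless, because the Bézout bound is monotone in the degrees.

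For the height inequality, fix a block $i$. View everything over the field $K_{\mathbf{t}_i}=K(\mathbf{t}_j\mid j\neq i)$, so that $\mathbf{t}_i$ is the only parameter block. The relevant quantity is the total $\mathbf{t}_i$-degree of the monomials $\alpha_I\mathbf{Y}^I$ when $Y_j$ carries the $\mathbf{t}_i$-weight $b_{i,j}$. We treat this as the height of the hypersurface $\overline{\operatorname{Im}(\varphi)}$ with respect to $\mathbf{t}_i$, weighted by $\mathbf{b}_i$. The arithmetic Perron bound of \cite[Corollary 3.14]{D2013} gives
\[
\Big(\prod_j d_j\Big)\Big(h_{\mathbf{t}_i}(V)+\deg(V)\sum_\ell \frac{b_{i,\ell}}{d_\ell}\Big).
\]
One point needs care. $E$ is taken primitive over $K[\mathbf{t}]$, whereas D'Andrea's polynomial is primitive over $K_{\mathbf{t}_i}[\mathbf{t}_i]$. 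The two differ by a factor in $K[\mathbf{t}_j: j\ne i]$, which does not change $\mathbf{t}_i$-degrees. So a single $E$ works for all blocks simultaneously.

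The main obstacle is this last compatibility check, together with verifying that D'Andrea's setting matches ours. His results are stated for $V$ defined over $K(\mathbf{t})$ with irreducibility over $K(\mathbf{t})$, while we pass to $K_{\mathbf{t}_i}(\mathbf{t}_i)$. Irreducibility is preserved because $K_{\mathbf{t}_i}(\mathbf{t}_i)=K(\mathbf{t})$, and the height $h_{\mathbf{t}_i}(V)$ is defined exactly through this identification. The remaining work is bookkeeping to pass from exact degrees to the upper bounds $d_j$ and $b_{i,j}$, which uses the monotonicity of both bounds.
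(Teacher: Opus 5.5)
Your proposal has a genuine gap at the point where you say the first bound ``is exactly the statement of \cite[Theorem~3.1]{D2013}''. You list as the only hypotheses to check that the $q_j$ are nonconstant in $\mathbf{x}$ and that the image is a hypersurface. The Perron-type theorem of D'Andrea--Krick--Sombra (like Jelonek's) is formulated for $r+1$ polynomials on an irreducible variety of dimension $r$, i.e.\ for a map that is generically finite onto a hypersurface. The proposition only assumes that $\overline{\operatorname{Im}(\varphi)}$ is a hypersurface, so $r=\dim V$ may exceed $\nu$. This is exactly how it is used later ($V=\mathbb{A}^n$ or $\mathbb{A}^{n+m(\nu+1)}$, with $\nu<n$ in general). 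Your own intersection sketch shows the failure. For $r>\nu$, the $\nu+1$ equations $q_j(\mathbf{x})=\lambda_j s^{d_j}$ on the $(r+1)$-dimensional variety $V\times\mathbb{A}^1$ define a set of dimension $r-\nu>0$. So there is no finite point count to bound by B\'ezout, and $\varphi$ has no finite degree. The same objection applies to your direct appeal to \cite[Corollary~3.14]{D2013} for the height inequality.

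The missing idea is a reduction to the generically finite case that leaves the right-hand sides unchanged.
\begin{enumerate}
\item After reordering, let $q_1,\dots,q_\nu$ be a transcendence basis of $K(\mathbf{t})(q_0,\dots,q_\nu)$ over $K(\mathbf{t})$, and complete it by coordinates $x_1,\dots,x_{r-\nu}$ to a transcendence basis of $K(\mathbf{t})(V)$. Then $\psi=(q_0,\dots,q_\nu,x_1,\dots,x_{r-\nu})\colon V\to\mathbb{A}^{r+1}$ is generically finite onto a hypersurface.
\item Apply \cite[Theorem~3.1, Corollary~3.14]{D2013} to $\psi$, using $\mathbf{x}$-degree bound $1$ and $\mathbf{t}_i$-degree bound $0$ for each added coordinate. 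The extra factors $1$ in the product and extra terms $0/1$ in the sum leave $\deg(V)\prod_j d_j$ and $h_{\mathbf{t}_i}(V)+\deg(V)\sum_\ell b_{i,\ell}/d_\ell$ unchanged.
\item Since $E(q_0,\dots,q_\nu)$ vanishes on $V$, the irreducible polynomial $E$ vanishes on $\operatorname{Im}(\psi)$. Hence the implicit equation $\hat E(\mathbf{Y},\mathbf{X})$ of $\psi$ equals $E$ up to a scalar. No $\mathbf{X}$-exponents occur in its support, and the bounds for $\hat E$ are precisely those claimed for $E$.
\end{enumerate}

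The rest of your plan is sound. The block-by-block primitivity check is right: the two normalizations differ by a unit of $K(\mathbf{t}_j : j\neq i)$. Passing from exact degrees to upper bounds also works, though not by bare monotonicity, since $\mathbf{d}$ and $\mathbf{b}_i$ also appear on the left-hand sides. For $\mathbf{d}$, divide by $\prod_j d_j$. For $\mathbf{b}_i$, use $I_\ell\le\deg(V)\prod_{j\neq\ell}d_j$, which follows from the first inequality.
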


\begin{proof}
Under our assumptions, we set
$W =  \overline{\operatorname{Im}(\varphi)},$ the hypersurface defined in $\mathbb{A}^{\nu +1}$ as the zero set of the polynomial $E$.
Since the map $\varphi : V \to W$ is dominant, the induced map of function fields
$K(\mathbf{t})(W) \hookrightarrow K(\mathbf{t})(V)$ is an inclusion of fields.
Moreover, $K(\mathbf{t})(W)$ is generated over $K(\mathbf{t})$ by the classes of
$q_0,\ldots,q_{\nu}$. As $
\operatorname{trdeg}_{K(\mathbf{t})} K(\mathbf{t})(W) = \nu$, without loss of generality, we may assume that
$q_1,\ldots,q_\nu$ form a transcendence basis of $K(\mathbf{t})(W)$ over $K(\mathbf{t})$.

On the other hand, $K(\mathbf{t})(V)$ is generated over $K(\mathbf{t})$ by the classes of
$x_1,\ldots,x_N$, and if $r=\dim(V)$, we have
$\operatorname{trdeg}_{K(\mathbf{t})} K(\mathbf{t})(V) = r.$
It follows that $K(\mathbf{t})(V)$ has transcendence degree $r-\nu$ over
$K(\mathbf{t})(W)$, and we may assume that the classes of $x_1,\ldots,x_{r-\nu}$ form a transcendence basis over $K(\mathbf{t})(W)$.
Therefore, $q_1,\ldots, q_\nu, x_1, \dots, x_{r-\nu}$ is a transcendence basis of $K(\mathbf{t})(V)$ over $K(\mathbf{t})$.

We now consider the polynomials $q_0,q_1\ldots,q_{\nu},x_1,\ldots,x_{r-\nu}$, and the map
\[\psi: V \to \mathbb{A}^{r+1}, \qquad \mathbf{x} \mapsto (q_0(\mathbf{x}),\ldots,q_{\nu}(\mathbf{x}), x_1,\dots, x_{r-\nu}).\]
Since $q_1,\ldots,q_\nu,x_1,\ldots,x_{r-\nu}$ are algebraically independent over $K(\mathbf{t})$ and $V$ is irreducible, the Zariski closure of the image $\psi$ is a hypersurface and $\psi$ is generically finite onto its image. 

A careful analysis of the proof of \cite[Theorem 3.1]{D2013} implies that the result is also valid  when the \(d_j\) and \(b_{i,j}\) are taken as upper bounds for the corresponding degrees instead of the exact degrees. Then, applying  \cite[Corollary 3.14]{D2013}, we deduce that a primitive irreducible polynomial
\[
\hat{E} =
 \sum_{(I, J )\in\mathbb{N}_0^{\nu+1}\times \mathbb{N}_0^{r-\nu}} \alpha_{I, J}\, \mathbf{Y}^I  \mathbf{X}^J 
\in K[\mathbf{t}][Y_0,\ldots,Y_{\nu},X_1,\ldots,X_{r-\nu}]
\]
such that $\hat{E}(q_0,\ldots,q_{\nu},x_1,\ldots,x_{r-\nu})=0$ on $V$ satisfies the following support bounds: for every $(I,J)\in \operatorname{supp}(\hat{E}) \subseteq \mathbb{N}_0^{\nu+1} \times \N_0^{r-\nu}$,
\begin{itemize}
\item $\langle \mathbf{d},I \rangle + \langle \mathbf{1},J \rangle \le \displaystyle \deg(V) \Big(\prod_{j=0}^{\nu} d_j \Big) \Big( \prod_{h=1}^{r-\nu} 1 \Big) = \displaystyle \deg(V) \Big ( \prod_{j=0}^{\nu} d_j \Big),$
\item for $1\le i \le k$, 
\begin{align*}
\deg_{\mathbf{t}_i}(\alpha_{I,J}) + \langle \mathbf{b}_i,I\rangle + \langle \mathbf{0},J \rangle 
& \le \displaystyle \Big( \prod_{j=0}^{\nu} d_j \Big) \Big ( \prod_{h=1}^{r-\nu} 1 \Big)  \Big(h_{\mathbf{t}_i}(V) + \deg(V) \Big( \sum_{\ell=0}^{\nu} \frac{b_{i,\ell}}{d_\ell} + \sum_{h=1}^{r-\nu} \frac{0}{1} \Big) \Big) \\ 
& = \displaystyle \Big( \prod_{j=0}^{\nu} d_j \Big)  \Big( h_{\mathbf{t}_i}(V) + \deg(V) \Big( \sum_{\ell=0}^{\nu} \frac{b_{i,\ell}}{d_\ell}  \Big) \Big).
    \end{align*}
\end{itemize}
As the defining equation of a hypersurface is unique -up to multiplication by a nonzero scalar- and the irreducible polynomial $E$ vanishes on the image of $\psi$ (since $E(q_0,\dots, q_\nu)=0$ on $V$), it follows that $\hat{E}$ coincides with $E$. In particular, this implies that $J=0$ for every $(I,J)\in \operatorname{supp}(\hat{E})$, and the bounds in the statement  hold. \end{proof}

\section{Bounds for the Support of the Eliminant Polynomial} \label{sec 2}

Let $\mathbf{x}=(x_1,\ldots,x_n)$, $\mathbf{u}=(u_1,\ldots,u_m)$ and $y$ be differential indeterminates over $\mathbb{K}$, as introduced in Subsection \ref{subsect: Basic Def}. Given polynomials $ f_1,\ldots,f_n, g \in \mathbb{K}[\mathbf{x},\mathbf{u}]$, we consider the dynamical system with one observation function $y$ given by:
\begin{equation}\label{def: difsystem}
\Sigma :=
\begin{cases}
\mathbf{x}' = \mathbf{f}(\mathbf{x},\mathbf{u}), \\[4pt]
y = g(\mathbf{x},\mathbf{u}),
\end{cases}
\end{equation}
where $\mathbf{f} = (f_{1},\ldots,f_{n})$.
Associated to this dynamical system we have the differential ideal of $\mathbb{K}\{\mathbf{x},\mathbf{u},y\}$ generated by the differential polynomials $\mathbf{x}_i'-f_i(\mathbf{x},\mathbf{u})$, for $1\le i \le n$, and $y- g(\mathbf{x},\mathbf{u})$. We will write $[\Sigma] \subseteq \mathbb{K}\{\mathbf{x},\mathbf{u},y\}$ to denote this associated differential ideal. 

Our aim is to characterize a minimal differential polynomial in the ideal $[\Sigma]$, in the sense that we will precise later, depending only on the differential variables $\mathbf{u}$ and $y$ (and its successive derivatives), that is, in $[\Sigma]\cap \mathbb{K}\{ \mathbf{u},y\} $.
This is an \emph{eliminant} polynomial, since we eliminate the state variables $\mathbf{x}$ to get a differential equation involving only the input and output variables ($\mathbf{u}$ and $y$).

\subsection{Eliminant polynomials}

The goal of this subsection is to define an eliminant polynomial associated with the system $\Sigma$.
This eliminant will be obtained from the algebraic relations among
$y, y', \ldots, y^{(k)},\dots $ arising from the identity $y= g(\mathbf{x}, \mathbf{u})$.

By formally differentiating the observation function in $\mathbb{K}[\mathbf{x},\mathbf{u}]$,
we obtain a sequence of polynomials
$(g^{(k)})_{k\ge 0} \subseteq
\mathbb{K}\{\mathbf{x},\mathbf{u}\}$
defined recursively from $g(\mathbf{x},\mathbf{u})$ by:
\begin{align*}
 g'(\mathbf{x},\mathbf{u},\mathbf{x}',\mathbf{u}') & =  \partial_{\operatorname{coef}}(g)+\nabla_{\mathbf{x}}(g)\cdot \mathbf{x}'
       +  \nabla_{\mathbf{u}}(g)\cdot \mathbf{u}', \\[6pt]
g^{(2)}\big(\mathbf{x}^{(\leq 2)},\mathbf{u}^{(\leq 2)}\big)  & = \partial_{\operatorname{coef}}(g') +  \nabla_{\mathbf{x}}(g')\cdot \mathbf{x}'
       + \nabla_{\mathbf{x}'}(g')\cdot \mathbf{x}^{(2)}
       + \nabla_{\mathbf{u}}(g')\cdot \mathbf{u}'
       + \nabla_{\mathbf{u}'}(g')\cdot \mathbf{u}^{(2)}, \\[6pt]
& \vdots \\
g^{(k)}\big(\mathbf{x}^{(\leq k)},\mathbf{u}^{(\leq k)}\big)   & = \; \partial_{\operatorname{coef}}(g^{(k-1)})+
\sum_{i=0}^{k-1} \nabla_{\mathbf{x}^{(i)}}(g^{(k-1)}) \cdot \mathbf{x}^{(i+1)}
+
\sum_{i=0}^{k-1} \nabla_{\mathbf{u}^{(i)}}(g^{(k-1)}) \cdot \mathbf{u}^{(i+1)}\\
& \vdots
\end{align*}
such that, according to $\Sigma$,  for every $k\ge 0$, we have
$$y^{(k)} =g^{(k)}\big(\mathbf{x}^{(\leq k)},\mathbf{u}^{(\leq k)}\big).$$

Consider the $\K\{\bu\}$-algebra (evaluation) morphism
\begin{center}
$\mathcal{E}: \K\{\mathbf{x},\bu,y\} \to \K\{\bu\}[\mathbf{x}]$,
\end{center}
defined by $\mathcal{E}(u_j^{(k)}) =u_j^{(k)}$, $\mathcal{E}(x_i) =x_i$, $\mathcal{E}(y) = g(\mathbf{x}, \bu)$, $\mathcal{E}(x_i')= f_i(\mathbf{x}, \bu)$
and, recursively, for $k>0$,
$\mathcal{E}(x_i^{(k)})= \mathcal{E}(f_i^{(k-1)})$ for  $i=1,\dots, n$,  and $\mathcal{E}(y^{(k)}) = \mathcal{E}(g^{(k)}).$

The morphism $\mathcal{E}$ allows us to encode the differential system algebraically by substituting the derivatives of the state variables according to the equations of $\Sigma$. 
It follows straightforwardly that
$$\K\{\mathbf{x}, \bu,y\}/[\Sigma]\simeq \K\{\bu\}[\mathbf{x}],$$
and then, $\{y,y',\dots, y^{(n)}\}\subset \K\{\mathbf{x}, \bu,y\}/[\Sigma]$ is algebraically dependent over $\text{Frac}(\K\{\bu\})$. Let  $\nu\le n$ be the minimum such that $\{y,y',\dots, y^{(\nu)}\}$ is algebraically dependent.

By restricting $\mathcal{E}$ to $\mathbb{K}[\mathbf{x}^{(\leq \nu)},\mathbf{u}^{(\leq \nu)}]$, we 
obtain the $\mathbb{K}[\mathbf{u}^{(\leq \nu)}]$-algebra morphism
\[
\mathcal{E}:
\mathbb{K}[\mathbf{x}^{(\leq \nu)},\mathbf{u}^{(\leq \nu)}]
\rightarrow
\mathbb{K}[\mathbf{x},\mathbf{u}^{(\leq \nu)}]
\]
and, extending scalars to $\mathbb{K}(\mathbf{u}^{(\leq \nu)})$,  we have a $\mathbb{K}(\mathbf{u}^{(\leq \nu)})$-algebra morphism
\[
\mathcal{E}:
\mathbb{K}(\mathbf{u}^{(\leq \nu)})[\mathbf{x}^{(\leq \nu)}]
\rightarrow
\mathbb{K}(\mathbf{u}^{(\leq \nu)})[\mathbf{x}],
\]
where the variables $\mathbf{u}$ and their derivatives are viewed as elements in the coefficient base field.

This leads naturally to the following polynomial maps, whose images will control the algebraic relations among the derivatives of the output $y$ of the system $\Sigma$.

\begin{notation}
Let $\widehat{K}$ be an algebraic closure of $\mathbb{K}(\mathbf{u}^{(\leq \nu)})$. We define morphisms
\[\begin{array}{rcl}
     \varphi : \mathbb{A}^n_{\widehat{K}} &\to& \mathbb{A}^{\nu+1}_{\widehat{K}} \\
     \mathbf{x} &\mapsto &
 \big(\mathcal{E}(g), \mathcal{E}(g'),\ldots, \mathcal{E}(g^{(\nu)})\big)
     \end{array}\qquad
 \begin{array}{rcl}
 \varphi^{\mathbf{u}} : \mathbb{A}^{n + m (\nu+1)}_K
 &\to& \mathbb{A}^{(\nu+1)+m(\nu+1)}_K \\
 (\mathbf{u}^{(\leq \nu)},\mathbf{x}) &\mapsto &
 \big(\mathbf{u}^{(\leq \nu)}, \mathcal{E}(g), \ldots, \mathcal{E}(g^{(\nu)})\big)
 \end{array}\]
\end{notation}

The next proposition shows that both constructions lead to hypersurfaces and that, after a suitable normalization, their defining equations coincide. 

\begin{proposition}\label{prop: dim e irred}
In the previous setting, the Zariski closure of the images of the morphisms $\varphi^{\mathbf{u}}$ and $\varphi$ are hypersurfaces. Moreover, the primitive irreducible polynomial in $K[\mathbf{u}^{(\leq \nu)}][Y_0,\ldots,Y_{\nu}]$ defining $\overline{\operatorname{Im}(\varphi)}$ coincides, up to a scalar factor, with the defining equation of $\overline{\operatorname{Im}(\varphi^{\mathbf{u}})}$.
\end{proposition}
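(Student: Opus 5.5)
The plan is to work throughout with function fields and transcendence degrees, since both images are irreducible and their dimensions can be read off from the fields generated by the coordinate functions.

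\emph{Step 1: $\overline{\operatorname{Im}(\varphi)}$ is a hypersurface.} Since $\mathbb{A}^n_{\widehat K}$ is irreducible, $W:=\overline{\operatorname{Im}(\varphi)}$ is irreducible, and its coordinate ring is $\widehat K[\mathcal{E}(g),\ldots,\mathcal{E}(g^{(\nu)})]\subseteq \widehat K[\mathbf{x}]$. Hence
\[
\dim W=\operatorname{trdeg}_{\widehat K}\widehat K(\mathcal{E}(g),\ldots,\mathcal{E}(g^{(\nu)})).
\]
By the isomorphism $\K\{\mathbf{x},\bu,y\}/[\Sigma]\simeq\K\{\bu\}[\mathbf{x}]$, the class of $y^{(k)}$ corresponds to $\mathcal{E}(g^{(k)})$. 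The elements $\mathcal{E}(g^{(k)})$ with $k\le\nu$ involve only $\mathbf{u}^{(\le\nu)}$, and $\mathbf{x}$ is algebraically independent over $\operatorname{Frac}(\K\{\bu\})$. Consequently, algebraic (in)dependence of these elements over $\operatorname{Frac}(\K\{\bu\})$ is the same as over $\K(\mathbf{u}^{(\le \nu)})$. Two facts follow from the minimality of $\nu$:
\begin{itemize}
\item $\mathcal{E}(g),\ldots,\mathcal{E}(g^{(\nu-1)})$ are algebraically independent;
\item the full family up to $\mathcal{E}(g^{(\nu)})$ is algebraically dependent.
\end{itemize}
Transcendence degree is preserved under the algebraic extension $\widehat K/\K(\mathbf{u}^{(\le\nu)})$, since both fields are contained in the field $\widehat K(\mathbf{x})$, which is linearly disjoint from them. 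Therefore $\dim W=\nu$, so $W$ is a hypersurface in $\mathbb{A}^{\nu+1}$.

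\emph{Step 2: $\overline{\operatorname{Im}(\varphi^{\mathbf{u}})}$ is a hypersurface.} The same argument applies over $K$. The image closure $W^{\mathbf{u}}$ is irreducible, and its function field is
\[
K(\mathbf{u}^{(\le\nu)},\mathcal{E}(g),\ldots,\mathcal{E}(g^{(\nu)})).
\]
The variables $\mathbf{u}^{(\le\nu)}$ contribute $m(\nu+1)$ to the transcendence degree, and the remaining elements contribute $\nu$ by Step 1, passing from $\K$ to $K$ by the same algebraicity argument. This gives $\dim W^{\mathbf{u}}=m(\nu+1)+\nu$, which is codimension one in $\mathbb{A}^{(\nu+1)+m(\nu+1)}$.

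\emph{Step 3: comparing the defining equations.} Let $E\in K[\mathbf{u}^{(\le\nu)}][Y_0,\ldots,Y_\nu]$ be an irreducible defining polynomial of $W^{\mathbf{u}}$. It must involve some $Y_j$, because the $\mathbf{u}^{(\le\nu)}$ are algebraically independent on $W^{\mathbf{u}}$. It is also primitive as a polynomial in the $Y$'s: any non-constant content factor $c(\mathbf{u})$ would have to vanish on $W^{\mathbf{u}}$ by irreducibility, which is impossible since $W^{\mathbf{u}}$ dominates $\mathbb{A}^{m(\nu+1)}$.

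Since $E(\mathbf{u},\mathcal{E}(g),\ldots)=0$ identically, $E$ viewed in $\widehat K[Y]$ vanishes on $W$. Now let $P\in K[\mathbf{u}^{(\le\nu)}][Y]$ be the primitive irreducible polynomial defining $W$; I am interpreting the statement as saying that such a $P$ exists. Clearing denominators in any relation over $\widehat K$ shows that $P$ vanishes on $\operatorname{Im}(\varphi^{\mathbf{u}})$, so $E\mid P$ in $K[\mathbf{u},Y]$. Because $P$ is irreducible and $E$ is non-constant in $Y$, it follows that $P=cE$ with $c\in K^\times$.

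\emph{Main obstacle.} The delicate point is ensuring that irreducibility of $P$ over the field $K(\mathbf{u}^{(\le\nu)})$ is the right notion, so that $P$ does not split further over $\widehat K$. I plan to handle this by defining $W$ via the ideal over $K(\mathbf{u}^{(\le\nu)})$. The kernel of $Y_k\mapsto\mathcal{E}(g^{(k)})$ there is a prime ideal of height one, hence principal, generated by $E$. Then Gauss's lemma transports primitivity and irreducibility between $K(\mathbf{u})[Y]$ and $K[\mathbf{u}][Y]$.
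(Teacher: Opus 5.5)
Your proof is correct and follows essentially the paper's route. The dimension counts rest on the algebraic independence of $\mathcal{E}(g),\ldots,\mathcal{E}(g^{(\nu-1)})$ and the dependence of the full family up to $\mathcal{E}(g^{(\nu)})$, both coming from the minimality of $\nu$; you phrase this as a transcendence-degree computation, where the paper uses an explicit relation for the upper bound and a dominant projection for the lower bound. The comparison is likewise a divisibility argument: you use $E\mid P$ plus irreducibility of $P$ via Gauss's lemma, while the paper uses mutual divisibility in $K(\mathbf{u}^{(\le\nu)})[Y]$ plus primitivity.

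Your extra care about passing between $\operatorname{Frac}(\K\{\bu\})$, $\K(\mathbf{u}^{(\le\nu)})$ and $\widehat K$ is welcome, but two points can be sharpened. Invariance of transcendence degree under the algebraic extension $\widehat K/\K(\mathbf{u}^{(\le\nu)})$ follows directly from additivity in towers, so the linear-disjointness justification is not needed. The descent issue you flag is settled rather than sidestepped by flatness: the kernel of $Y_k\mapsto\mathcal{E}(g^{(k)})$ over $\widehat K$ is generated by the kernel over $K(\mathbf{u}^{(\le\nu)})$, and it is prime because $\widehat K[\mathbf{x}]$ is a domain. Hence the generator remains irreducible over $\widehat K$.
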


\begin{proof}
Since the elements $y,\ldots,y^{(\nu)}$ are algebraically dependent over $\mathbb{K}(\mathbf{u}^{(\leq \nu)})$ modulo $[\Sigma]$,  there exists a nonzero polynomial  $E \in K(\mathbf{u}^{(\leq \nu)})[Y_0,\ldots,Y_\nu]$ such that:
\[
E(y,\ldots,y^{(\nu)}) = 0 \mod [\Sigma].
\]
Multiplying by a suitable element of $K(\mathbf{u}^{(\leq \nu)})$, we may assume that $E$ has polynomial coefficients in $\mathbf{u}^{(\leq \nu)}$. It follows that $E$ is a nonzero polynomial vanishing on $\operatorname{Im}(\varphi^{\mathbf{u}})$ and, therefore,
\[
\dim\big(\overline{\operatorname{Im}(\varphi^{\mathbf{u}})}\big)
\le m(\nu+1)+\nu.
\]
To prove the opposite inequality, let $\pi: \mathbb{A}^{m(\nu+1)} \times \mathbb{A}^{\nu+1}
\rightarrow
\mathbb{A}^{m(\nu+1)} \times \mathbb{A}^{\nu}$
be the projection that forgets the last coordinate. We claim that: $\overline{\pi \circ \varphi^{\mathbf{u}}(\mathbb{A}^{n+m(\nu+1)})}
=
\mathbb{A}^{m(\nu+1)} \times \mathbb{A}^{\nu}.$
Otherwise, there would exist a nonzero polynomial
$
P \in K[\mathbf{u}^{(\leq \nu)},Y_0,\ldots,Y_{\nu-1}]
$
such that $P \circ (\pi \circ \varphi^{\mathbf{u}})$ vanishes identically on $\mathbb{A}^{n + m(\nu+1)}$. Since the first $m(\nu+1)$ coordinates of $\pi \circ \varphi^{\mathbf{u}}$ are precisely $\mathbf{u}^{(\leq \nu)}$, this implies that
$
P\big(\mathbf{u}^{(\leq \nu)}, \mathcal{E}(g),\ldots,\mathcal{E}(g^{(\nu-1)})\big)=0.
$
This yields a polynomial relation
\[
P\big(\mathbf{u}^{(\leq \nu)}, y, y', \ldots, y^{(\nu-1)}\big)=0
\mod [\Sigma],
\]
which contradicts the fact that $\{y,\ldots,y^{(\nu-1)}\}$ are algebraically independent over $\mathbb{K}(\mathbf{u}^{(\leq \nu)})$. Therefore, $\dim\big(\overline{\operatorname{Im}(\varphi^{\mathbf{u}})}\big)
\ge m(\nu+1)+\nu$ and $\overline{\operatorname{Im}(\varphi^\mathbf{u})}$ is a hypersurface.

The statement for $\varphi$ follows analogously.

Now, let $E^{\mathbf{u}} \in K[\mathbf{u}^{(\leq \nu)},Y_0,\ldots,Y_\nu]$ and $E \in K(\mathbf{u}^{(\leq \nu)})[Y_0,\ldots,Y_\nu]$
be the irreducible defining equations of
$\overline{\operatorname{Im}(\varphi^{\mathbf{u}})}$ and $\overline{\operatorname{Im}(\varphi)}$, respectively.
Multiplying $E$ by a suitable element in $K(\mathbf{u}^{(\leq \nu)})^\times$, we may assume that it has coefficients in
$K[\mathbf{u}^{(\leq \nu)}]$ and is primitive.
Then $E$ vanishes on $\overline{\operatorname{Im}(\varphi^{\mathbf{u}})}$, so it lies in the ideal generated by $E^{\mathbf{u}}$. Conversely, viewing $\mathbf{u}^{(\leq \nu)}$ as parameters, $E^{\mathbf{u}}$ vanishes on $\overline{\operatorname{Im}(\varphi)}$, hence it lies in the ideal generated by $E$.

Thus, $E$ and $E^{\mathbf{u}}$ divide each other in $K(\mathbf{u}^{(\leq \nu)})[Y_0,\ldots,Y_\nu]$, and so, they differ by a multiplicative factor in $K(\mathbf{u}^{(\leq \nu)})^\times$. Since both are primitive in $K[\mathbf{u}^{(\leq \nu)}]$, this factor lies in $K^\times$, and the claim follows.
\end{proof}

\bigskip

The previous results allow us to introduce the eliminant polynomial associated with the system $\Sigma$.
  \begin{definition}[Eliminant polynomial] \label{definición eliminante}
Let $\Sigma$ be a polynomial dynamical system as in \eqref{def: difsystem}.
The \emph{eliminant polynomial} of $\Sigma$, obtained by eliminating the state variables $\mathbf{x}$, is the unique (up to a nonzero scalar)
irreducible polynomial
\[
\operatorname{Elim}(\Sigma,\mathbf{x})
\in \K[\mathbf{u}^{(\leq \nu)},\,y,\,y',\ldots,y^{(\nu)}]
\]
introduced in Proposition~\ref{prop: dim e irred} as the defining equation of $\overline{\operatorname{Im}(\varphi^\mathbf{u})}$.
\end{definition}
It follows directly from the definition of \(\nu\) and from
Proposition~\ref{prop: dim e irred} that the polynomial
\(\operatorname{Elim}(\Sigma,\mathbf{x})\) has order exactly \(\nu\).

\bigskip
The next examples illustrate the eliminant polynomial in the nonparametric case and in the presence of a single differential parameter, respectively.

\begin{example}\label{Ejemplo}
Consider
\[
\Sigma:\quad
\begin{cases}
x_1' = x_1^d + x_2^d,\\
x_2' = x_1,\\
y = x_1.
\end{cases}
\qquad d \ge 2.
\]
The corresponding eliminant polynomial is
\[
\operatorname{Elim}(\Sigma,\mathbf{x})
=
\bigl(y'' - d y^{d-1} y'\bigr)^d
- d^d y^d \bigl(y' - y^d\bigr)^{d-1}
\in
\mathbb{K}[y,y',y''].
\]
\end{example}

\begin{example}\label{segundo ejemplo}
Consider a system with a single differential parameter $u$:
\[
\Sigma:\quad
\begin{cases}
x_1' = u + 3x_1 - 5x_2,\\
x_2' = -4 + 2x_1 - 2u,\\
y = 5 + 11x_1 + 3u^2.
\end{cases}
\]
The corresponding eliminant polynomial is
\[
\operatorname{Elim}(\Sigma,\mathbf{x}) =
30u^2 - 18u u' + 6(u')^2 + 6u\,u^{(2)} + 110u + 11u' - 10y + 3y' - y'' + 270
\in \mathbb{K}[u^{(\leq2)},y,y',y''].
\]
\end{example}

\subsection{Degree bounds and proof of the main result}\label{sec:main bounds}

The proof of our main theorem (see Theorem~\ref{mainthm}) is rather short once the appropriate tools are in place. 
The main idea will be to apply Proposition~\ref{Coro: Dandrea} to
$(\mathbb{A}^n,\varphi)$ and $(\mathbb{A}^{n+m(\nu+1)},\varphi^{\mathbf{u}})$.
We therefore begin by proving the upper bounds for the degrees of the polynomials involved in the definition of these maps that we will use.

\begin{lemma}\label{lema tecnico}
Let $f_1,\ldots,f_n, g \in \mathbb{K}[\mathbf{x},\mathbf{u}]$ and  $\Sigma$ be the associated dynamical system as in \eqref{def: difsystem}.  Set
$$D := \max_{1 \leq i \leq n} \deg(f_i), \quad D_{\mathbf{x}} := \max_{1 \leq i \leq n} \deg_{\mathbf{x}}(f_i), \quad D_{\mathbf{u}}:= \max_{1 \leq i \leq n} \deg_{\mathbf{u}}(f_i).$$
Assume $D_\mathbf{x}>0$. Then, for every $h \in \mathbb{K}[\mathbf{x},\mathbf{u}]\setminus \{0\}$, we have that:
\begin{itemize}
    \item $\deg(\mathcal{E}(h^{(s)})) \leq \deg(h) + s (D  -1 ) $,
    \item $\deg_{\mathbf{x}} (\mathcal{E}(h^{(s)})) \leq \deg_{\mathbf{x}}(h)+ s(D_{\mathbf{x}} -1 ) $,
    \item $\deg_{\mathbf{u}^{(\leq \nu)} }(\mathcal{E}(h^{(s)})) \leq \deg_{\mathbf{u}}(h) + sD_\mathbf{u}, $
    \item $\deg_{\mathbf{u}^{(r)}}(\mathcal{E}(h^{(s)})) \leq \displaystyle \left \lfloor \frac{s}{r} \right \rfloor$ for $1 \leq r \leq \nu$.
\end{itemize}
\end{lemma}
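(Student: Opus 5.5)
The plan is induction on $s$, using a recursion for $\mathcal{E}(h^{(s)})$ that avoids the intermediate variables $\mathbf{x}^{(k)}$. Set $H_s:=\mathcal{E}(h^{(s)})\in\mathbb{K}[\mathbf{x},\mathbf{u}^{(\le s)}]$. Since $\mathcal{E}$ replaces $x_i'$ by $f_i$ and is compatible with the recursive definition, we have
\[
H_{s}=\partial_{\operatorname{coeff}}(H_{s-1})+\nabla_{\mathbf{x}}(H_{s-1})\cdot \mathbf{f}(\mathbf{x},\mathbf{u})+\sum_{k=0}^{s-1}\nabla_{\mathbf{u}^{(k)}}(H_{s-1})\cdot \mathbf{u}^{(k+1)} .
\]
In words, $H_s$ is obtained from $H_{s-1}$ by the derivation $\delta$ of $\mathbb{K}[\mathbf{x},\mathbf{u}^{(\le \infty)}]$ extending $\partial$ on $\mathbb{K}$, with $\delta(x_i)=f_i$ and $\delta(u_j^{(k)})=u_j^{(k+1)}$. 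I would first justify this identity. It follows from $\mathcal{E}\circ\partial=\delta\circ\mathcal{E}$, which can be checked on generators because both sides are derivations composed with a ring morphism: on $x_i^{(k)}$, $\mathcal{E}(x_i^{(k+1)})=\mathcal{E}(f_i^{(k)})=\delta^{k}(f_i)$, by induction on $k$. After that, all four bounds reduce to tracking how $\delta$ affects each monomial.

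Take a monomial $c\,\mathbf{x}^{\alpha}\prod (u_j^{(k)})^{\beta_{j,k}}$ of $H_{s-1}$ and split $\delta$ of it into three kinds of terms.
\begin{itemize}
\item The coefficient term $\partial(c)\cdot(\text{same monomial})$ does not change the monomial.
\item An $\mathbf{x}$-term replaces one factor $x_i$ by $f_i$. This changes the total degree by at most $D-1$, the $\mathbf{x}$-degree by at most $D_{\mathbf{x}}-1$, and the $\mathbf{u}$-degree by at most $D_{\mathbf{u}}$. It involves only $\mathbf{u}=\mathbf{u}^{(0)}$, so it never raises the degree in $\mathbf{u}^{(r)}$ for $r\ge1$.
\item A $\mathbf{u}$-term replaces one $u_j^{(k)}$ by $u_j^{(k+1)}$. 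This preserves the total degree, the $\mathbf{x}$-degree and the $\mathbf{u}^{(\le\nu)}$-degree.
\end{itemize}
Since $D_{\mathbf{x}}\ge1$, we have $D-1\ge0$ and $D_{\mathbf{x}}-1\ge0$. Hence each step increases the total degree by at most $D-1$, the $\mathbf{x}$-degree by at most $D_{\mathbf{x}}-1$, and the $\mathbf{u}$-degree by at most $D_{\mathbf{u}}$. Induction then gives the first three bounds. For $s\le\nu$ all $\mathbf{u}^{(k)}$ that appear have $k\le s\le\nu$, so the degree in $\mathbf{u}^{(\le\nu)}$ is just the $\mathbf{u}$-degree; for $s>\nu$ the bound holds a fortiori.

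The fourth bound is the main obstacle, because the degree in $\mathbf{u}^{(r)}$ is not monotone along single steps. A factor $u_j^{(r)}$ can only be created from a $u_j^{(r-1)}$, which in turn had to be created, and so on. Note also that $h$ itself has degree $0$ in $\mathbf{u}^{(r)}$ for $r\ge1$. I would therefore prove a stronger weighted invariant: for every monomial of $H_s$,
\[
\sum_{k\ge1} k\,\deg_{\mathbf{u}^{(k)}}(\text{monomial})\le s .
\]
This holds for $s=0$, where the left side is $0$. Coefficient terms and $\mathbf{x}$-terms leave the weighted sum unchanged or decrease it (they replace a factor $x_i$ by $f_i$, which contains only $\mathbf{u}^{(0)}$). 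A $\mathbf{u}$-term raises it by exactly $1$. So each step increases the weighted sum by at most $1$. Consequently $r\cdot\deg_{\mathbf{u}^{(r)}}\le s$, that is, $\deg_{\mathbf{u}^{(r)}}(H_s)\le\lfloor s/r\rfloor$.

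Cancellations among terms only remove monomials, so all the monomial-wise bounds pass to $H_s$. Finally, the hypothesis $h\neq0$ ensures the degrees of $h$ are finite, so the stated bounds make sense; if $H_s=0$ there is nothing to prove.
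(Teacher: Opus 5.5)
Your proof is correct. For the first three bounds it follows the paper's argument. The paper also reduces to a single differentiation--evaluation step via $\mathcal{E}(h^{(s)})=\mathcal{E}\bigl((\mathcal{E}(h^{(s-1)}))'\bigr)$, and sorts the resulting terms into coefficient terms, $\mathbf{x}$-terms and $\mathbf{u}$-terms exactly as you do. The paper only asserts that identity, whereas you justify it by checking $\mathcal{E}\circ\partial=\delta\circ\mathcal{E}$ on generators.

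The fourth bound is where you take a different route. The paper first assumes, for simplicity, that $\partial(\mathbb{K})=0$. It then uses the multivariate Fa\`a di Bruno formula: every monomial of the \emph{unevaluated} $h^{(s)}$ has weight exactly $s$, where $x_i^{(k)}$ and $u_j^{(k)}$ each have weight $k$. Next it substitutes $x_i^{(k)}\mapsto\mathcal{E}(f_i^{(k-1)})$. A strong induction hypothesis, applied to the auxiliary polynomials $f_i$, gives $\deg_{\mathbf{u}^{(r)}}\mathcal{E}(f_i^{(k-1)})\le\lfloor (k-1)/r\rfloor\le (k-1)/r$, and summing these estimates over the monomial gives $s/r$.

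You instead carry the invariant $\sum_{k\ge1}k\,\deg_{\mathbf{u}^{(k)}}\le s$ directly on the evaluated polynomials $H_s=\delta^s(h)$, and observe that each application of $\delta$ raises it by at most one. This buys several things:
\begin{itemize}
\item no combinatorial formula is needed;
\item there is no auxiliary induction over the $f_i$;
\item there is no need to argue that $\partial_{\operatorname{coeff}}$-terms are ``not dominant'', since they simply leave the monomial unchanged, so your argument covers an arbitrary differential field $\mathbb{K}$ rigorously;
\item all $r$ follow at once from one stronger weighted bound.
\end{itemize}
The paper's route keeps explicit track of the weighted structure of $h^{(s)}$ before substitution, and shows how much each substituted $x_i^{(k)}$ can contribute. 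For this lemma, though, your argument is shorter and more self-contained.
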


\begin{proof}
We argue by induction on $s\in\mathbb{N}_0$, the case $s=0$ being immediate. For the inductive step, we use that differentiation and evaluation in the equations of $\Sigma$ commute up to re-evaluation. More precisely,
   $$ \mathcal{E}(h^{(s)}) = \mathcal{E}\big((\mathcal{E}(h^{(s-1)}))'\big).$$
Thus, $\mathcal{E}(h^{(s)})$ is obtained from $\mathcal{E}(h^{(s-1)})$ by one differentiation followed by evaluation in the equations of $\Sigma$. It is therefore enough to control how the relevant degrees behave under a single differentiation-evaluation step.

By the chain rule, differentiating $\mathcal{E}(h^{(s-1)})$ produces terms of the form
\[
\partial_{\operatorname{coef}}(\mathcal E(h^{(s-1)})), \quad 
\frac{\partial \mathcal{E}(h^{(s-1)})}{\partial x_i}\,x_i',
\quad \text{ and }\quad 
\frac{\partial \mathcal{E}(h^{(s-1)})}{\partial u_j^{(r)}}\,u_j^{(r+1)}.
\]
The terms that come from $\partial_{\operatorname{coef}}(\mathcal E(h^{(s-1)}))$ do not increase any of the degrees since there is no substitution to be made. Moreover, as the degree of the polynomial does not increase when applying $\partial_{\operatorname{coef}}$ and $D_\mathbf{x}\geq1$, it is clear that:
$$\deg(\mathcal{E}(\partial_{\operatorname{coef}
}(\mathcal{E}(h^{(s-1)})))) \leq \deg(h)+ s(D-1),  \quad \deg_{\mathbf{x}}(\mathcal{E}(\partial_{\operatorname{coef}
}(\mathcal{E}(h^{(s-1)})))) \leq \deg_{\mathbf{x}}(h)+ s(D_{\mathbf{x}}-1).$$

In the terms from $\frac{\partial \mathcal{E}(h^{(s-1)})}{\partial x_i}\,x_i'$, after evaluation in the equations of $\Sigma$, each occurrence of $x_i'$ is replaced by $f_i(\mathbf{x}, \mathbf{u})$. Since differentiation with respect to $x_i$ decreases the degree in $\mathbf{x}$ by one, while the substitution $x_i'=f_i(\mathbf{x},\mathbf{u})$ increases it by at most $D_\mathbf{x}$, it follows that the degree with respect to $\mathbf{x}$ increases by at most $D_\mathbf{x}-1$. The same holds for the total degree.

Finally, the terms from $\frac{\partial \mathcal{E}(h^{(s-1)})}{\partial u_j^{(r)}}\,u_j^{(r+1)}$ do not change when $\mathcal{E}$ is applied; so, their degree in $\mathbf{x}$ and total degree are bounded by those of $h^{(s-1)}$.

We conclude that the first and second inequalities in the Lemma hold.

For the degree with respect to $\mathbf{u}^{(\leq \nu)}$, the terms of the form  $\frac{\partial \mathcal{E}(h^{(s-1)})}{\partial u_j^{(r)}}\,u_j^{(r+1)}$
do not increase the degree, whereas the substitution $x_i'=f_ii(\mathbf{x}, \mathbf{u})$ increases it by at most $D_{\mathbf u}$. Hence, one step increases the degree with respect to $\mathbf{u}^{(\leq \nu)}$ by at most $D_{\mathbf u}$. Iterating these estimates yields the  bound.

Finally, to prove the last inequality, notice that, as before, the terms arising from repeatedly applying
\(\partial_{\operatorname{coef}}\) are not those having dominant degree. Therefore, for simplicity, in the following computation we assume that \(\mathbb K\) is a field of constants, that is, \(\partial(\mathbb K)=0\).

Now for  $r\ge 1$ we proceed by induction in $s \in \mathbb{N}_0$. The initial step is clear. Now, by the multivariate Faà di Bruno formula (see \cite[III, Theorem C]{Comtet1974}), every monomial in $h^{(s)}$ is of the form 
$\prod_{j=1}^m \prod_{k= 0}^s \bigl(u_j^{(k)}\bigr)^{\beta_{j,k}}
\cdot
\prod_{i=1}^n \prod_{k=0}^s \bigl(x_i^{(k)}\bigr)^{\gamma_{i,k}}$,
with $\beta_{j,k},\gamma_{i,k}\in\mathbb{N}_0$ satisfying:

$$\displaystyle \sum_{j=1}^m \sum_{k\ge 0} k\,\beta_{j,k}
+
\sum_{i=1}^n \sum_{k\ge 0} k\,\gamma_{i,k}
= s.$$
Hence, applying the preceding estimates to each monomial of \(h^{(s)}\), and using the induction hypothesis for \(\mathcal E(f_i^{(k-1)})\), we obtain:
\begin{align*}\deg_{\mathbf{u}^{(r)}}(\mathcal{E}(h^{(s)})) 
& \leq  \sum_{j=1}^m  \beta_{j,r} + \sum_{i=1}^n \sum_{k =1}^s \gamma_{i,k} \deg_{\mathbf{u}^{(r)}} \left ( \mathcal{E}(f_i^{(k-1)})) \right ) \\ 
& \leq  \sum_{j=1}^m \beta_{j,r} + \sum_{i=1}^n \sum_{k =1}^s \gamma_{i,k} \left \lfloor \frac{k-1}{r} \right  \rfloor \\ & \leq \frac{1}{r}\left ( \sum_{j=1}^m r \beta_{j,r} + \sum_{i=1}^n \sum_{k=1}^{s} \gamma_{i,k}(k-1) \right) \leq \frac{s}{r}.\end{align*}

The result follows since the degree is a nonnegative integer.
\end{proof}

\bigskip

Now we are ready to state and prove our main Theorem. It can be viewed as a differential-parameter analogue of the bounds established in the nonparametric and parametric settings in \cite{mukhina2025projectingdynamicalsystemssupport} and \cite{10.1007/978-3-032-09645-6_15}.

\begin{theorem}\label{mainthm}
Let $g,f_1,\ldots,f_n \in \mathbb{K}[\mathbf{x},\mathbf{u}]$ and  $\Sigma$ be the dynamical system introduced in \eqref{def: difsystem}. Let $\nu:= \operatorname{ord}(\operatorname{Elim}(\Sigma,\mathbf{x}))$. 
Set $d:= \deg(g)$, $d_\mathbf{x}:= \deg_\mathbf{x}(g)$, $d_\mathbf{u} := \deg_\mathbf{u}(g)$, 
$D := \max_{1 \leq i \leq n} \deg(f_i)$, $D_{\mathbf{x}} := \max_{1 \leq i \leq n} \deg_{\mathbf{x}}(f_i)$, $ D_{\mathbf{u}} := \max_{1 \leq i \leq n} \deg_{\mathbf{u}}(f_i).$

Assume $D_\mathbf{x}, d_\mathbf{x} > 0$ and let $\Delta := \displaystyle \prod_{i=0}^\nu \bigl(d + i(D-1)\bigr)$ and $\Delta_\mathbf{x}:= \displaystyle \prod_{i=0}^\nu \bigl(d_\mathbf{x} + i(D_{\mathbf{x}}-1)\bigr)$.
Then, for every monomial $\displaystyle\Big( \prod_{k=0}^\nu \prod_{j=1}^m (u_j^{(k)})^{\ell_{j,k}} \Big)\prod_{i=0}^\nu (y^{(i)})^{e_i}$
appearing in $\operatorname{Elim}(\Sigma,\mathbf{x})$, we have:
\begin{enumerate}
\item $\displaystyle \sum_{k=0}^\nu \sum_{j=1}^m \ell_{j,k} + \sum_{i=0}^\nu \bigl(d + i(D-1)\bigr) e_i \;\le\; \Delta,$
\label{mainthm: desigualdad 1}

\item $\displaystyle  \sum_{i=0}^\nu \bigl(d_\mathbf{x} + i(D_{\mathbf{x}}-1)\bigr) e_i \;\le\; \Delta_\mathbf{x},$
\label{mainthm: desigualdad 2}

\item $\displaystyle \sum\limits_{k=0}^\nu \sum\limits_{j=1}^m \ell_{j,k} + \sum\limits_{i=0}^\nu \bigl(d_\mathbf{u} + iD_\mathbf{u}\bigr) e_i 
\;\le\;
\sum\limits_{l=0}^\nu \frac{(d_\mathbf{u}+l D_\mathbf{u})\Delta_\mathbf{x}}{d_\mathbf{x} + l(D_\mathbf{x}-1)},$
\label{mainthm: desigualdad 3}

\item $\displaystyle \sum\limits_{j=1}^m \ell_{j,r}+ \sum\limits_{i=r}^\nu \left \lfloor\frac{i}{r} \right \rfloor e_i
\;\le\; 
\sum\limits_{l=r}^\nu \frac{\left \lfloor{l}/{r} \right \rfloor \Delta_\mathbf{x}}{d_\mathbf{x}+l(D_\mathbf{x}-1)}, $ for $1\le r \le \nu.$
\label{mainthm: desigualdad 4}
\end{enumerate}
\end{theorem}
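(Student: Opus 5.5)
The plan is to apply Proposition~\ref{Coro: Dandrea} twice: once to $(\mathbb{A}^{n+m(\nu+1)},\varphi^{\mathbf{u}})$ with no parameters, and once to $(\mathbb{A}^n,\varphi)$ with the parameters $\mathbf{t}=\mathbf{u}^{(\le\nu)}$ split into suitable blocks. Proposition~\ref{prop: dim e irred} guarantees that both images are hypersurfaces and that, after normalization, their defining polynomials coincide with $\operatorname{Elim}(\Sigma,\mathbf{x})$. So every support bound obtained from either application transfers to $\operatorname{Elim}(\Sigma,\mathbf{x})$. All degree inputs will come from Lemma~\ref{lema tecnico} applied with $h=g$.

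\textbf{Inequality 1.} Apply Proposition~\ref{Coro: Dandrea} to $V=\mathbb{A}^{n+m(\nu+1)}$, which has degree $1$, with $k=0$. The coordinate functions of $\varphi^{\mathbf{u}}$ are the $u_j^{(k)}$, each of degree $1$, and $\mathcal{E}(g^{(i)})$, of total degree at most $d+i(D-1)$ by the lemma. Taking $d_j$ equal to these bounds, the product of all the $d_j$ equals $\Delta$. The first bullet of the proposition is then exactly Inequality~1. A small point to address: the proposition requires $q_j\notin K[\mathbf{t}]$, which holds trivially here since there are no parameters. We also need the bounds to be positive; this holds because $d\ge d_\mathbf{x}>0$ and $D\ge D_\mathbf{x}\ge 1$.

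\textbf{Inequalities 2 and 3.} Apply Proposition~\ref{Coro: Dandrea} to $V=\mathbb{A}^n$ over $K(\mathbf{t})$ with $\mathbf{t}=\mathbf{u}^{(\le\nu)}$ as a single block. Here $\deg V=1$ and $h_{\mathbf{t}}(V)=0$, since the height of affine space is zero. Take $q_i=\mathcal{E}(g^{(i)})$ with $d_i=d_\mathbf{x}+i(D_\mathbf{x}-1)$ and $b_i=d_\mathbf{u}+iD_\mathbf{u}$, both from the lemma. The first bullet gives Inequality~2. In the second bullet, the term $\deg_{\mathbf{t}}(\alpha_I)$ dominates $\sum_{j,k}\ell_{j,k}$ for each monomial of the coefficient $\alpha_I$, so we obtain Inequality~3. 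We must check that $q_i\notin K[\mathbf{t}]$, i.e.\ that each $q_i$ genuinely depends on $\mathbf{x}$. I would argue this from minimality of $\nu$: if some $q_i$ were $\mathbf{x}$-free, then $y^{(i)}$ would be algebraic over $K(\mathbf{u}^{(\le\nu)})$ modulo $[\Sigma]$, contradicting the algebraic independence of $y,\dots,y^{(\nu-1)}$ when $i<\nu$. The case $i=\nu$ needs separate handling; one can note that otherwise $\nu$ could be forced to $0$ or handled degenerately. This check is the main technical nuisance rather than a deep obstacle.

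\textbf{Inequality 4.} Fix $1\le r\le\nu$ and split the parameters into two blocks: $\mathbf{t}_1=\mathbf{u}^{(r)}$ and $\mathbf{t}_2$ the rest. The height of $\mathbb{A}^n$ with respect to $\mathbf{t}_1$ is still $0$. By the last item of the lemma we may take $b_{1,i}=\lfloor i/r\rfloor$, which vanishes for $i<r$, and keep $d_i$ as before. The second bullet of the proposition for block $1$ then yields
\[\deg_{\mathbf{u}^{(r)}}(\alpha_I)+\sum_{i=r}^{\nu}\lfloor i/r\rfloor e_i\le \Delta_\mathbf{x}\sum_{l=r}^{\nu}\frac{\lfloor l/r\rfloor}{d_\mathbf{x}+l(D_\mathbf{x}-1)},\]
which gives Inequality~4.

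The only subtle points overall are the hypotheses of Proposition~\ref{Coro: Dandrea}, namely nonconstancy of the $q_i$ and the fact that the bounds may be upper bounds rather than exact degrees, together with the identification of the primitive polynomial over $K[\mathbf{u}^{(\le\nu)}]$ with $\operatorname{Elim}(\Sigma,\mathbf{x})$. Both are settled by Proposition~\ref{prop: dim e irred} and the proof of Proposition~\ref{Coro: Dandrea}.
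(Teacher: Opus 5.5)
Your proposal is correct and follows the paper's proof. Proposition~\ref{Coro: Dandrea} is applied to $(\mathbb{A}^{n+m(\nu+1)},\varphi^{\mathbf{u}})$ for inequality~1, and to $(\mathbb{A}^n_{\widehat K},\varphi)$, with zero height and the degree bounds of Lemma~\ref{lema tecnico}, for inequalities~2--4; your split into the two blocks $\mathbf{u}^{(r)}$ and ``the rest'' is equivalent to the paper's blocks $\mathbf{u},\mathbf{u}',\dots,\mathbf{u}^{(\nu)}$, since the bound for one block treats the other parameters as base field. The non-constancy hypothesis you flag is not discussed in the paper, and your suggestion that $\nu$ might be forced to $0$ is wrong, since $d_\mathbf{x}>0$ already gives $\nu\ge 1$. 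The case $\mathcal{E}(g^{(\nu)})\in K[\mathbf{u}^{(\le\nu)}]$ does occur (e.g.\ $x_1'=u_1$, $x_2'=x_2$, $y=x_1$), but then $\operatorname{Elim}(\Sigma,\mathbf{x})=y^{(\nu)}-\mathcal{E}(g^{(\nu)})$, and all four inequalities follow directly from Lemma~\ref{lema tecnico} because every factor $d+l(D-1)$ and $d_\mathbf{x}+l(D_\mathbf{x}-1)$ is at least $1$.
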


\begin{proof}
We prove each inequality by applying Proposition~\ref{Coro: Dandrea} to the hypersurfaces introduced in Proposition~\ref{prop: dim e irred}, and using the degree estimates for the polynomials $\mathcal{E}(g^{(i)})$ provided by Lemma~\ref{lema tecnico}.

To prove the first inequality, we apply Proposition~\ref{Coro: Dandrea} to the variety $V=\mathbb{A}^{n+m(\nu+1)}$ and  the morphism
\[\varphi^{\mathbf{u}}: \mathbb{A}^{n + m (\nu+1)}
 \to \mathbb{A}^{(m+1)(\nu+1)}, \quad
\varphi^{\mathbf{u}}(\mathbf{u}^{(\leq \nu)},\mathbf{x}) =
 \big(\mathbf{u}^{(\leq \nu)}, \mathcal{E}(g), \mathcal{E}(g'),\ldots, \mathcal{E}(g^{(\nu)})\big)
.\]
Since $V$ is an affine space, its degree is equal to $1$. The first $m(\nu+1)$ coordinates of $\varphi^{\mathbf{u}}$ consist of a single $u_j^{(k)}$, so they have degree $1$. On the other hand, Lemma~\ref{lema tecnico} gives the upper bounds
 \[\deg (\mathcal{E}(g^{(i)})) \leq d + i(D-1), \quad i=0, \dots, \nu,\]
  for the total degree of each of the last $\nu+1$ coordinates of $\varphi^{\mathbf{u}}$.
  Substituting these bounds into Proposition~\ref{Coro: Dandrea}, we obtain the first inequality of the statement.

For the remaining inequalities, we apply the same result  to the variety $V=\mathbb{A}^n$, now regarded over the algebraic closure $\widehat{K}$ of the field $\mathbb{K}(\mathbf{u}^{(\leq \nu)})$, and the morphism
\[\varphi : \mathbb{A}^n_{\widehat{K}} \to \mathbb{A}^{\nu+1}_{\widehat{K}},  \quad
    \varphi( \mathbf{x} ) =
 \big(\mathcal{E}(g), \mathcal{E}(g'),\ldots, \mathcal{E}(g^{(\nu)}\big).\]

  In this situation, the  parameters $\mathbf{u}$ and their derivatives are viewed as elements of the coefficient field, and therefore only the degrees with respect to the variables $\mathbf{x}$ have to be considered. Again, since $V$ is an affine space, its degree is $1$, and Lemma~\ref{lema tecnico} provides the required bounds for the $\mathbf{x}$-degrees of the coordinates of $\varphi$:
  \[\deg_{\mathbf{x}}(\mathcal{E}(g^{(i)}))\le d_{\mathbf{x}} + i(D_{\mathbf{x}}-1), \quad i=0, \dots, \nu.\]
  The first inequality in Proposition~\ref{Coro: Dandrea} then yields the second inequality of the Theorem.

The third inequality is obtained by applying the second part of Proposition~\ref{Coro: Dandrea} in the same setting as above, viewing all the variables $\mathbf{u}^{(\leq \nu)}$ as a single parameter block. Since the height of an affine space is zero, the only contribution comes from the degrees of the coordinates $\mathcal{E}(g^{(i)})$ with respect to this parameter block. These are bounded by Lemma~\ref{lema tecnico} as:
\[\deg_{\mathbf{u}^{(\leq \nu)}}(\mathcal{E}(g^{(i)}))\le d_{\mathbf{u}} + i D_{\mathbf{u}}, \quad i=0, \dots, \nu.\]
From these estimates, the second inequality in Proposition~\ref{Coro: Dandrea} yields the desired bound.

Finally, the inequalities of the fourth type are proved in exactly the same way as inequality 3, except that instead of considering $\mathbf{u}^{(\leq \nu)}$ as a single block, we decompose it into the blocks $\mathbf{u},\mathbf{u}',\ldots,\mathbf{u}^{(\nu)}$. We then apply the second part of Proposition~\ref{Coro: Dandrea} to these blocks. The estimates for the corresponding degrees are again given by Lemma~\ref{lema tecnico}, now using the bound for the degree with respect to derivatives of a fixed order: for every $1 \leq r \leq \nu$,
\[\deg_{\mathbf{u}^{(r)}}(\mathcal{E}(g^{(i)})) \leq \displaystyle \left \lfloor \frac{i}{r} \right \rfloor, \quad i=0,\dots, \nu.\]
 The stated inequalities follow for every $1\le r\le \nu$.
 \end{proof}

\bigskip

We now compare our estimates with those obtained in \cite[Theorem~1]{10.1007/978-3-032-09645-6_15}, in order to highlight the differences in the structure of the bounds and the additional information captured by our results.

In \cite{10.1007/978-3-032-09645-6_15},  Mukhina and Pogudin studied the case where all the parameters are constant (that is, when $u_j'=0$ for all $j$). In this situation, they show that every monomial $\displaystyle\prod_{j=1}^m u_j^{\ell_{j}} \prod_{i=0}^\nu (y^{(i)})^{e_i}$ appearing in $\operatorname{Elim}(\Sigma,\mathbf{x})$ satisfies the following inequalities:
\begin{align}
&\sum_{j=1}^m \ell_j + \sum_{i=0}^{\nu}(d_{\mathbf{u}}+iD_{\mathbf{u}})e_i
\le
\sum_{i=0}^{\nu}(d_{\mathbf{u}}+iD_{\mathbf{u}})
\prod_{\substack{j=0\\j\neq i}}^{\nu}\bigl(d_{\mathbf{x}}+j(D_{\mathbf{x}}-1)\bigr), \label{ec 1 Mukhina}\\
&\sum_{j=1}^m \ell_j + \sum_{i=0}^{\nu}(d_{\mathbf{x}}+i(D_{\mathbf{x}}-1))e_i
\le
\prod_{i=0}^{\nu}\bigl(d_{\mathbf{x}}+d_{\mathbf{u}}+i(D_{\mathbf{x}}+D_{\mathbf{u}}-1)\bigr), \label{ec 2 Mukhina}\\
& \sum_{i=0}^{\nu}(d_{\mathbf{x}}+i(D_{\mathbf{x}}-1))e_i
\le
\prod_{i=0}^{\nu}\bigl(d_{\mathbf{x}}+i(D_{\mathbf{x}}-1)\bigr). \label{ec 3 Mukhina}
\end{align}

We observe that inequality~\eqref{ec 2 Mukhina} is redundant. Indeed, adding inequalities~\eqref{ec 1 Mukhina} and~\eqref{ec 3 Mukhina}, we obtain:
\begin{align*}
\sum_{j=1}^m \ell_j + \sum_{i=0}^\nu \bigl(d_{\mathbf{x}}+d_{\mathbf{u}}+i(D_{\mathbf{x}}+D_{\mathbf{u}}-1)\bigr)e_i
&\le \\
\sum_{i=0}^{\nu}(d_{\mathbf{u}}+iD_{\mathbf{u}})
\prod_{\substack{j=0\\j\neq i}}^{\nu}\bigl(d_{\mathbf{x}}+j(D_{\mathbf{x}}-1)\bigr)
&+ \prod_{i=0}^{\nu}\bigl(d_{\mathbf{x}}+i(D_{\mathbf{x}}-1)\bigr).
\end{align*}
Now, the left hand side of this inequality is greater than  or equal to the left hand side of  \eqref{ec 2 Mukhina}, and its right-hand side is bounded above by $\displaystyle \prod_{i=0}^{\nu}\bigl(d_{\mathbf{x}}+d_{\mathbf{u}}+i(D_{\mathbf{x}}+D_{\mathbf{u}}-1)\bigr),$ since all the terms in the sum appear in the expansion of this product. Hence \eqref{ec 2 Mukhina} follows from \eqref{ec 1 Mukhina} and \eqref{ec 3 Mukhina}.

The fact that one of the bounds in \cite{10.1007/978-3-032-09645-6_15} does not provide additional information is related to the fact that the estimates are expressed in terms of separate degree contributions in $\mathbf{x}$ and $\mathbf{u}$.
In contrast, our approach is based on considering the total degree with respect to the variables $(\mathbf{x},\mathbf{u})$. This leads to a different behavior: the analogue of inequality~\eqref{ec 2 Mukhina}, namely \eqref{mainthm: desigualdad 1}, is no longer redundant in general and, moreover, it is a strictly stronger constraint.

\begin{example}\label{rmk: noredundant}
 Consider the system $\Sigma$ from Example \ref{segundo ejemplo}. We have:
\[D=1, \quad  D_\mathbf{x}=1,\quad  D_\mathbf{u}=1,\quad d=2, \quad 
d_\mathbf{x}=1,\quad d_\mathbf{u}=2.
\] The resulting inequalities from Theorem \ref{mainthm} are:
\begin{enumerate}
    \item $\ell_0+\ell_1+\ell_2+2e_0+2e_1+2e_2 \leq 8$

    \item $e_0+e_1+e_2 \leq 1$

    \item $\ell_0+\ell_1+\ell_2+2e_0+3e_1+4e_2 \leq 9$

    \item \begin{enumerate}
		\item $\ell_1+e_1+2e_2 \leq 3$
                \item $\ell_2+e_2 \leq 1.$
\end{enumerate}
\end{enumerate}

In this case none of the inequalities is redundant: the points
\[
(0,9,0,0,0,0),\quad
(0,0,0,0,2,0),\quad
(0,6,0,0,0,1),\quad
(4,0,0,0,0,0),\quad
(0,0,2,0,0,0)
\]
satisfy all the remaining inequalities while violating, respectively, inequalities 1, 2, 3, 4(a) and 4(b), in the order
in which they are displayed.
On the other hand, the polytope defined by these five inequalities contains \(140\) lattice points, whereas, as
we may observe in Example~\ref{segundo ejemplo}, the actual eliminant has only
\(10\) terms. 

This shows that, even when all the inequalities arising from
Theorem~\ref{mainthm} are non-redundant, the resulting polytope may still
overestimate the support of the eliminant significantly.
 \end{example}

In our setting, additional constraints involving derivatives of fixed order of the parameter variables $\mathbf{u}$ naturally arise, see Inequalities \ref{mainthm: desigualdad 4} in Theorem \ref{mainthm}.
These bounds, which control the contribution of derivatives of a fixed order, are new in the sense they do not arise as generalizations of previous inequalities. When 
$$\sum\limits_{l=r}^\nu \frac{\left \lfloor{l}/{r} \right \rfloor}{d_\mathbf{x}+l(D_\mathbf{x}-1)}<1$$
for a fixed $1 \leq r \leq \nu$, we get a constraint in the degree of $\operatorname{Elim}(\Sigma,\mathbf{x})$ in the variables $\mathbf{u}^{(r)}$ that is sharper than the ones from inequalities \eqref{mainthm: desigualdad 1} and \eqref{mainthm: desigualdad 3}. In particular, this is always the case when \(r=\nu\) and either $d_\mathbf{x}>1$ or $D_\mathbf{x} >1$, implying that at least one of all of these new inequalities provides sharper bounds on the degrees in the parameter derivatives under these mild assumptions.

\begin{remark}
Consider a non-autonomous system
\[
\Sigma :=
\begin{cases}
\mathbf{x}' = \mathbf{f}(t,\mathbf{x}, \mathbf{u}), \\
y = g(t,\mathbf{x}, \mathbf{u}),
\end{cases}
\]
where the dynamics and the output depend explicitly on time by means of polynomial functions  $\mathbf{f}(t,\mathbf{x}, \mathbf{u}), g(t,\mathbf{x}, \mathbf{u})$ in $\mathbb{K}[t, \mathbf{x}, \mathbf{u}]$. By introducing a new variable $x_{0} = t$ with $x_{0}' = 1$, this system can be rewritten as
\[
\begin{cases}
x_{0}' = 1, \\
\mathbf{x}' = \mathbf{f}(x_0,\mathbf{x}, \mathbf{u}), \\
y = g(x_0,\mathbf{x}, \mathbf{u}),
\end{cases}
\]
which is a system as in \eqref{def: difsystem}. Hence, Theorem~\ref{mainthm} can be applied to take into account the degrees in $t$ in non-autonomous polynomial systems.
\end{remark}

\subsection{Exploiting the structure of the output: partial evaluation}\label{subsec: output}

The bounds obtained in Theorem~\ref{mainthm} arise from the degree estimates in Lemma~\ref{lema tecnico}, which provide suitable weights for applying Proposition~\ref{Coro: Dandrea}. However, these estimates are obtained after repeatedly differentiating the output equation and fully applying the evaluation morphism $\mathcal{E}$ associated with the system $\Sigma$. In this process, all derivatives are treated uniformly, and the substitutions
\[
x_i^{(k)} \mapsto f_i^{(k-1)}(\mathbf{x},\mathbf{u})
\]
propagate degrees as if every monomial in the successive derivatives $g^{(s)}$ had the maximal possible degree. In particular, this procedure does not take into account cancellations, sparsity patterns, or other structural features that may appear after substitution.

An illustration of this phenomenon appears when the output depends only on a distinguished state variable, namely in systems of the form
\begin{equation}\label{sistema con x1}
    \Sigma:=
    \begin{cases}
        \mathbf{x}' = \mathbf{f}(\mathbf{x},\mathbf{u}),\\
        y = h(x_1),
    \end{cases}
\end{equation}
which naturally arise in applications such as nonlinear control, observability, and identifiability problems. In this setting, the derivatives of the output have a very specific structure, allowing one to exploit this fact in order to obtain improved degree estimates and sharper weighted bounds.

Throughout this subsection, we study this setting in greater depth. We first derive refined inequalities obtained directly from the structure of the output equation and compare them with those arising from Theorem~\ref{mainthm}. We then introduce a partial evaluation procedure which yields further inequalities by keeping certain derivatives as independent variables.

As in the previous section, we set $\nu:=\operatorname{ord}(\operatorname{Elim}(\Sigma, \mathbf{x}))$.

\begin{lemma}\label{lem: degx1}
Let $f_1,\ldots,f_n \in \mathbb{K}[\mathbf{x},\mathbf{u}]$, $h \in \mathbb{K}[x_1] \setminus \mathbb{K}$, and $\Sigma$ be the associated dynamical system as in \eqref{def: difsystem}. 
Set $$D: = \max_{1 \leq i \leq n} \deg(f_i), \quad D_{\mathbf{x}} := \max_{1 \leq i \leq n} \deg_{\mathbf{x}}(f_i), \quad D_{\mathbf{u}} := \max_{1 \leq i \leq n} \deg_{\mathbf{u}}(f_i).$$ 
Then, for $1 \le s \leq \nu$, the following inequalities hold:
\begin{itemize}
    \item $\deg (\mathcal{E}(h^{(s)})) \leq \deg(h) + \deg(f_1)-1 + (s-1) (D-1)$, 
    \item $\deg_{\mathbf{x}} (\mathcal{E}(h^{(s)})) \leq \deg(h) + \deg_\mathbf{x}(f_1)-1 + (s-1) ( D_\mathbf{x}-1) $,
    \item $\deg_{\mathbf{u}^{(\leq \nu)} }(\mathcal{E}(h^{(s)})) \leq \deg_\mathbf{u}(f_1) + (s-1)D_\mathbf{u} $,
    \item $\deg_{\mathbf{u}^{(r)}}(\mathcal{E}(h^{(s)})) \leq \displaystyle \left \lfloor \frac{s-1}{r} \right \rfloor$ for $1 \leq r \leq \nu$.
\end{itemize}
\end{lemma}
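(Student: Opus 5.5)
The plan is to reduce everything to Lemma~\ref{lema tecnico} after performing the first differentiation by hand. Since $h\in\mathbb{K}[x_1]$, we have $h' = \partial_{\operatorname{coef}}(h) + h_{x_1}(x_1)\,x_1'$, where $h_{x_1}$ denotes $\partial h/\partial x_1$. Hence
\[
\mathcal{E}(h') = \partial_{\operatorname{coef}}(h)(x_1) + h_{x_1}(x_1)\, f_1(\mathbf{x},\mathbf{u}) =: H(\mathbf{x},\mathbf{u}) \in \mathbb{K}[\mathbf{x},\mathbf{u}].
\]
This polynomial satisfies
\begin{itemize}
\item $\deg(H)\le \deg(h)-1+\deg(f_1)$ and $\deg_{\mathbf{x}}(H)\le \deg(h)-1+\deg_{\mathbf{x}}(f_1)$,
\item $\deg_{\mathbf{u}}(H)\le \deg_{\mathbf{u}}(f_1)$, because $h$ does not involve $\mathbf{u}$,
\item $\deg_{\mathbf{u}^{(r)}}(H)=0$ for every $r\ge 1$, since $H$ involves no derivatives of $\mathbf{u}$.
\end{itemize}
For the first two bounds we use that $\partial_{\operatorname{coef}}(h)$ has degree at most $\deg(h)$, and that $\deg(h)\le \deg(h)-1+\deg(f_1)$ as soon as $\deg(f_1)\ge 1$. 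If $f_1$ is constant, one checks directly that $H$ is a polynomial in $x_1$ of degree at most $\deg h$; this should be treated as a degenerate case, where the stated bound may need the convention $D\ge 1$ as in Lemma~\ref{lema tecnico}.

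The key step is the identity $\mathcal{E}(h^{(s)}) = \mathcal{E}\big((\mathcal{E}(h^{(s-1)}))'\big)$ already used in the proof of Lemma~\ref{lema tecnico}. Iterating it shows that $\mathcal{E}(h^{(s)}) = \mathcal{E}(H^{(s-1)})$ for $s\ge 1$. Indeed, $\mathcal{E}$ commutes with differentiation modulo $[\Sigma]$, so differentiating $h'$ and differentiating its evaluation $H$ give the same result after evaluation. We then apply Lemma~\ref{lema tecnico} to $H$ with $s-1$ in place of $s$:
\begin{itemize}
\item total degree: $\deg(H)+(s-1)(D-1)$;
\item $\mathbf{x}$-degree: $\deg_{\mathbf{x}}(H)+(s-1)(D_{\mathbf{x}}-1)$;
\item $\mathbf{u}^{(\le\nu)}$-degree: $\deg_{\mathbf{u}}(H)+(s-1)D_{\mathbf{u}}$.
\end{itemize}
Inserting the estimates for $H$ above yields the first three inequalities.

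For the last inequality, Lemma~\ref{lema tecnico} applied to $H$ gives directly $\deg_{\mathbf{u}^{(r)}}(\mathcal{E}(H^{(s-1)}))\le \lfloor (s-1)/r\rfloor$. The only point needing care is that the proof of that bound is an induction using the Faà di Bruno weight count on monomials of $H^{(s-1)}$. It applies to any $H\in\mathbb{K}[\mathbf{x},\mathbf{u}]$ with $\deg_{\mathbf{u}^{(r)}}(H)=0$, which holds here. So no modification is needed; as before, the $\partial_{\operatorname{coef}}$ terms do not contribute dominant degrees.

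The main obstacle I expect is justifying the commutation $\mathcal{E}(h^{(s)})=\mathcal{E}(H^{(s-1)})$ rigorously, and handling the hypothesis $D_{\mathbf{x}}>0$ of Lemma~\ref{lema tecnico}, which is not stated here. For the commutation I would argue that $h'-H\in[\Sigma]$, hence $h^{(s)}-H^{(s-1)}\in[\Sigma]$, and that $\mathcal{E}$ factors through $\mathbb{K}\{\mathbf{x},\mathbf{u},y\}/[\Sigma]\simeq\mathbb{K}\{\mathbf{u}\}[\mathbf{x}]$. For the hypothesis, I would note that if $D_{\mathbf{x}}=0$ the $\mathbf{x}$-degree never grows, and the bounds follow even more easily.
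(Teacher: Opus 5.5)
Your main line is the paper's proof: compute $\mathcal{E}(h')$ by hand, check $s=1$ directly, and for $s\ge 2$ apply Lemma~\ref{lema tecnico} to $H=\mathcal{E}(h')$ with $s-1$ in place of $s$. Your justification of $\mathcal{E}(h^{(s)})=\mathcal{E}(H^{(s-1)})$ (via $h'-H\in[\Sigma]$ and $\mathcal{E}$ factoring through $\mathbb{K}\{\mathbf{x},\mathbf{u},y\}/[\Sigma]$) is fine, and more careful than the paper's.

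The step that fails is your degenerate case. You claim that if $D_{\mathbf{x}}=0$ the $\mathbf{x}$-degree ``never grows'', so the bounds follow more easily. But then the right-hand side of the second inequality is $\deg(h)+\deg_{\mathbf{x}}(f_1)-1-(s-1)$, which \emph{decreases} with $s$. So ``never grows'' is not enough, and the bound really fails at $s=2$. For example, with $n=1$, $h=x_1^2$, $f_1=u$ you get $\mathcal{E}(h'')=2u^2+2x_1u'$, of $\mathbf{x}$-degree $1$, against a bound of $0$. What saves the lemma is the restriction $s\le\nu$, and the missing observation is exactly how the paper splits cases:
\begin{itemize}
\item If $\deg_{\mathbf{x}}(f_1)=0$, then $\mathcal{E}(y)=h(x_1)$ and $\mathcal{E}(y')$ both lie in $\mathbb{K}(\mathbf{u})(x_1)$, which has transcendence degree one over $\mathbb{K}(\mathbf{u})$. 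Hence they are algebraically dependent, $\nu=1$, and only your direct $s=1$ check is needed.
\item If $\deg_{\mathbf{x}}(f_1)>0$, then $D_{\mathbf{x}}>0$ and Lemma~\ref{lema tecnico} applies to $H$.
\end{itemize}

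Your worry about $\partial_{\operatorname{coef}}(h)$ points to a real issue the paper glosses over: it writes $\mathcal{E}(h')=\frac{\partial h}{\partial x_1}f_1$, tacitly taking $h$ with constant coefficients. However, no convention $D\ge 1$ repairs it. Absorbing $\partial_{\operatorname{coef}}(h)$ at $s=1$ needs $\deg(f_1)\ge 1$ for the first inequality and $\deg_{\mathbf{x}}(f_1)\ge 1$ for the second; you only addressed the former. When these fail, the statement itself is false over a field with non-trivial derivation. Take $\mathbb{K}=\mathbb{C}(t)$, $h=tx_1$, $f_1=u$: then $\nu=1$ and $\mathcal{E}(h')=x_1+tu$ has $\mathbf{x}$-degree $1>0$. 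So you should either assume the coefficients of $h$ are constants, or assume $\deg_{\mathbf{x}}(f_1)>0$. The latter is the hypothesis $\delta_{\mathbf{x}}>0$ under which the lemma is actually used in Corollary~\ref{Coro: Solo x1}, and with it your argument goes through completely.
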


\begin{proof}
 If $\deg_{\mathbf{x}}(f_1) = 0$, then $\nu =1$ and the inequalities hold.

Assuming $\deg_{\mathbf{x}}(f_1)>0$, we have $D_\mathbf{x} >0$.  Since $\mathcal{E}(h^{(1)}) = \frac{\partial h}{\partial x_1}(x_1) f_1(\mathbf{x}, \mathbf{u})$, the inequalities hold for $s=1$. For $s>1$, the result follows by applying Lemma~\ref{lema tecnico} to this polynomial.
\end{proof}

\bigskip

Note that, as $\mathcal{E}(h^{(s)})$ does not depend on $\mathbf{u}^{(\nu)}$ for all $1 \leq s \leq \nu$,  the eliminant polynomial does not depend on these variables either. Now, by applying Proposition \ref{Coro: Dandrea} as in Theorem \ref{mainthm} with the bounds from Lemma \ref{lem: degx1} we deduce:

\begin{corollary}\label{Coro: Solo x1}
Let $f_1,\ldots,f_n \in \mathbb{K}[\mathbf{x},\mathbf{u}]$, $h \in \mathbb{K}[x_1] \setminus \mathbb{K}$, and let $\Sigma$ be the associated dynamical system as in \eqref{def: difsystem} with output $ y = h (x_1)$. Let $\nu:= \operatorname{ord}(\operatorname{Elim}(\Sigma,\mathbf{x}))$. 
Set $d:= \deg(h)$, $\delta:= \deg(f_1)$,  $\delta_{\mathbf{x}}:= \deg_{\mathbf{x}}(f_1)$,  $\delta_{\mathbf{u}}:= \deg_{\mathbf{u}}(f_1)$, $D := \max_{1 \leq i \leq n} \deg(f_i)$, $D_{\mathbf{x}} := \max_{1 \leq i \leq n} \deg_{\mathbf{x}}(f_i)$, $ D_{\mathbf{u}} := \max_{1 \leq i \leq n} \deg_{\mathbf{u}}(f_i).$
Assume $d>0, \delta_{\mathbf{x}} >0$. 

Let $\Delta := d \displaystyle \prod_{i=0}^{\nu-1} \bigl(d + \delta -1+i (D-1)\bigr)$ and $\Delta_\mathbf{x}:= d \displaystyle \prod_{i=0}^{\nu-1} \bigl(d+\delta_{\mathbf{x}} -1 +i(D_{\mathbf{x}}-1)\bigr)$.
Then, for every monomial $\displaystyle\Big( \prod_{k=0}^{\nu-1} \prod_{j=1}^m (u_j^{(k)})^{\ell_{j,k}} \Big)\prod_{i=0}^\nu (y^{(i)})^{e_i}$ appearing in $\operatorname{Elim}(\Sigma,\mathbf{x})$, we have:
\begin{enumerate}
\item $\displaystyle \sum_{k=0}^{\nu-1} \sum_{j=1}^m \ell_{j,k}  + d e_0+ \sum_{i=1}^\nu \bigl(d+\delta-1 + (i-1)(D-1)\bigr) e_i \;\le\; \Delta$,
\item $\displaystyle de_0 +\sum_{i=1}^\nu \bigl(d+\delta_\mathbf{x}-1 + (i-1)(D_{\mathbf{x}}-1)\bigr) e_i \;\le\; \Delta_{\mathbf{x}}, $
\item $\displaystyle \sum_{k=0}^{\nu-1} \sum_{j=1}^m \ell_{j,k}  + \sum_{i=1}^\nu \bigl(\delta_\mathbf{u} + (i-1)D_\mathbf{u}\bigr) e_i \;\le\;  \sum_{l=0}^{\nu-1} \frac{(\delta_\mathbf{u}+l D_\mathbf{u}) \Delta_\mathbf{x} }{d+\delta_\mathbf{x}-1 + l (D_\mathbf{x}-1)} , $
\item $\displaystyle \sum_{j=1}^m \ell_{j,r}  + \sum_{i=r+1}^\nu \left \lfloor\frac{i-1}{r} \right \rfloor e_i \;\le\;  \sum_{l=r}^{\nu-1} \frac{\left \lfloor l/r \right \rfloor \Delta_\mathbf{x} }{d+\delta_\mathbf{x}-1+l(D_\mathbf{x}-1)}  $,  for $1 \leq r \leq \nu-1.$
\end{enumerate}
\end{corollary}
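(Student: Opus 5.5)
The plan is to mirror the proof of Theorem~\ref{mainthm}: apply Proposition~\ref{Coro: Dandrea} to the two maps $\varphi^{\mathbf{u}}$ and $\varphi$ of Proposition~\ref{prop: dim e irred}, now feeding in the sharper degree bounds of Lemma~\ref{lem: degx1} as weights. First I would record that $\mathcal{E}(h^{(0)})=h(x_1)$ has total and $\mathbf{x}$-degree $d$, and has degree $0$ in every $\mathbf{u}$-block. For $1\le i\le\nu$, Lemma~\ref{lem: degx1} bounds the degree of $\mathcal{E}(h^{(i)})$ by $d+\delta-1+(i-1)(D-1)$ in total, by $d+\delta_\mathbf{x}-1+(i-1)(D_\mathbf{x}-1)$ in $\mathbf{x}$, by $\delta_\mathbf{u}+(i-1)D_\mathbf{u}$ in $\mathbf{u}^{(\le\nu)}$, and by $\lfloor (i-1)/r\rfloor$ in $\mathbf{u}^{(r)}$. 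The lemma is stated with $\deg(h)$ in place of $\deg_\mathbf{x}$; since $h\in\mathbb{K}[x_1]$ these coincide. Since no $\mathcal{E}(h^{(i)})$ involves $\mathbf{u}^{(\nu)}$, the map $\varphi^{\mathbf{u}}$ can be taken on $\mathbb{A}^{n+m\nu}$ with coordinates $\mathbf{u}^{(\le\nu-1)}$. By the remark preceding the corollary, $\operatorname{Elim}(\Sigma,\mathbf{x})$ then lives in $\mathbb{K}[\mathbf{u}^{(\le\nu-1)},y^{(\le\nu)}]$, and the proof of Proposition~\ref{prop: dim e irred} goes through verbatim for this restricted map.

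For inequality 1, I would apply the first bullet of Proposition~\ref{Coro: Dandrea} to $V=\mathbb{A}^{n+m\nu}$ (degree $1$) with the restricted map $\varphi^{\mathbf{u}}$. The $m\nu$ coordinates $u_j^{(k)}$ get weight $1$, the coordinate $h$ gets weight $d$, and the coordinate $\mathcal{E}(h^{(i)})$, for $i\ge1$, gets weight $d+\delta-1+(i-1)(D-1)$. The product of the weights is exactly $\Delta$ after reindexing $i-1\mapsto i$. For inequality 2, I would use the first bullet for $\varphi$ on $\mathbb{A}^n_{\widehat K}$ with the $\mathbf{x}$-degree weights, whose product is $\Delta_\mathbf{x}$.

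For inequalities 3 and 4, I would use the second bullet for $\varphi$, with height $h_{\mathbf{t}_i}(\mathbb{A}^n)=0$ and $\deg(V)=1$. For inequality 3, the single block is $\mathbf{u}^{(\le\nu-1)}$, with $b_0=0$ and $b_i=\delta_\mathbf{u}+(i-1)D_\mathbf{u}$. The term $\ell=0$ in the sum $\sum b_\ell/d_\ell$ vanishes, and reindexing gives exactly the displayed right-hand side. For inequality 4, the blocks are $\mathbf{u}^{(r)}$ with $b_{r,i}=\lfloor (i-1)/r\rfloor$, which is zero for $i\le r$. The sum then runs over $i=r+1,\dots,\nu$, and the shift $l=i-1$ produces $\sum_{l=r}^{\nu-1}\lfloor l/r\rfloor\Delta_\mathbf{x}/(d+\delta_\mathbf{x}-1+l(D_\mathbf{x}-1))$. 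The left-hand side is $\deg_{\mathbf{u}^{(r)}}(\alpha_I)+\langle\mathbf{b}_r,I\rangle$, and $\deg_{\mathbf{u}^{(r)}}(\alpha_I)$ dominates $\sum_j\ell_{j,r}$ for every monomial. Finally, Proposition~\ref{prop: dim e irred} identifies the $E$ produced by Proposition~\ref{Coro: Dandrea} with $\operatorname{Elim}(\Sigma,\mathbf{x})$ up to scalar.

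The main obstacle I expect is the positivity bookkeeping that Proposition~\ref{Coro: Dandrea} needs. It requires the $q_j\notin K[\mathbf{t}]$ and positive $d_j$ to divide by. Here $d>0$ handles $q_0$. For $i\ge1$, the weight $d+\delta_\mathbf{x}-1+(i-1)(D_\mathbf{x}-1)\ge d\ge1$ is positive because $\delta_\mathbf{x}>0$ and $D_\mathbf{x}\ge\delta_\mathbf{x}\ge1$. However, an individual $\mathcal{E}(h^{(i)})$ could a priori lie in $K[\mathbf{u}]$. I would rule this out using minimality of $\nu$: such a coordinate would give an algebraic relation of order less than $\nu$, or would make the image not a hypersurface.
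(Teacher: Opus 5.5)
Your proposal follows the paper's proof: the paper also notes that no $\mathcal{E}(h^{(s)})$ involves $\mathbf{u}^{(\nu)}$, and then applies Proposition~\ref{Coro: Dandrea} to $\varphi^{\mathbf{u}}$ and $\varphi$ exactly as in Theorem~\ref{mainthm}, with weight $d$ on $h$ and the bounds of Lemma~\ref{lem: degx1} on $\mathcal{E}(h^{(i)})$ for $i\ge 1$; your reindexing of the products and sums is correct. One correction to your last paragraph, on a point the paper passes over: minimality of $\nu$ excludes $\mathcal{E}(h^{(i)})\in K[\mathbf{u}^{(\le \nu-1)}]$ only for $i<\nu$, and for $i=\nu$ it can occur (e.g.\ $h=x_1$, $f_1=x_2$, $f_2=u_1$ gives $\mathcal{E}(h'')=u_1$). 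In that case, however, $\operatorname{Elim}(\Sigma,\mathbf{x})=y^{(\nu)}-\mathcal{E}(h^{(\nu)})$ up to a scalar, and the four inequalities follow directly from Lemma~\ref{lem: degx1}, since every factor of $\Delta$ and $\Delta_\mathbf{x}$ is at least $1$.
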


\medskip

\begin{remark}
When $h(x_1)=x_1$ and no differential parameters are present,
Corollary~\ref{Coro: Solo x1} recovers the generic sharp bounds from
\cite[Theorems~1 and~3]{mukhina2025projectingdynamicalsystemssupport}
whenever the maximum $D_{\mathbf{x}} := \max_{1 \leq i \leq n} \deg_{\mathbf{x}}(f_i)$ is attained at some $i\ne 1$.

Nevertheless, the full evaluation procedure does not always recover all the sharp
bounds in the extremal case $\delta_\mathbf{x}=D_\mathbf{x}$, since it yields
only one of the corresponding inequalities. This motivates the partial
evaluation method introduced below.
\end{remark}

We now introduce a partial evaluation procedure in which certain derivatives
of the state variables are kept as independent variables, while the remaining
ones are recursively substituted using the equations of $\Sigma$.
This produces refined degree estimates when $\delta_{\mathbf{x}} > \max_{2\le i \le n} \deg_\mathbf{x}(f_i)$ by distinguishing between the different substitution steps appearing in the evaluation process.

Consider the partial evaluation morphism
\[
\mathcal{E}_1:
\mathbb{K}[\mathbf{u}^{(\leq \nu-1)}, \mathbf{x}^{(\leq \nu)}]
\longrightarrow
\mathbb{K}[\mathbf{u}^{(\leq \nu-1)},\mathbf{x},x_1^{(1)}],
\]
which acts as $\mathcal{E}$ except that it keeps $x_1^{(1)}$ unevaluated, and the associated $\mathbb{K}(\mathbf{u}^{(\leq \nu-1)})$-morphism 
\[
\mathcal{E}_1:
\mathbb{K}(\mathbf{u}^{(\leq \nu-1)})[\mathbf{x}^{(\leq \nu)}]
\longrightarrow
\mathbb{K}(\mathbf{u}^{(\leq \nu-1)})[\mathbf{x},x_1^{(1)}].
\]
We define the  varieties
$V \subseteq \mathbb{A}_{\mathbb{K}}^{\nu m+n+1}$ as 
\begin{equation}\label{eq:defV}
V =\Big\{
(\mathbf{u}^{(\leq \nu-1)},\mathbf{x},x_1^{(1)}) \in
\mathbb{A}^{\nu  m+n+1}
\mid x_1^{(1)} = f_1(\mathbf{x},\mathbf{u})
\Big\},
\end{equation}
and $\widehat V \subseteq \mathbb{A}_{\widehat{K}}^{n+1}$, where $\widehat{K}$ is an algebraic closure of $\mathbb{K}(\mathbf{u}^{(\leq \nu-1)})$, as
\begin{equation}\label{eq:defhatV}
\widehat V =\Big\{
(\mathbf{x},x_1^{(1)}) \in
\mathbb{A}_{\widehat{K}}^{n+1}
\mid x_1^{(1)} = f_1(\mathbf{x},\mathbf{u})
\Big\}.
\end{equation}
Note that $V$ and $\widehat V$ are irreducible hypersurfaces in the corresponding affine spaces.

Associated with the previous evaluation maps and varieties, we define 
\begin{equation}\label{eq:defpsi}
\psi:V \rightarrow \mathbb{A}^{ \nu \cdot m + \nu+1}, \quad
\psi(\mathbf{u}^{(\leq \nu-1)},\mathbf{x},x_1^{(1)})
=
(\mathbf{u}^{(\leq \nu-1)},h,\mathcal{E}_1(h'), \ldots,  \mathcal{E}_1(h^{(\nu)})),
\end{equation}
and
\begin{equation}\label{eq:defhatpsi}
\widehat \psi: \widehat{V} \rightarrow \mathbb{A}_{\widehat{K}}^{ \nu+1}, \quad
\widehat \psi(\mathbf{x},x_1^{(1)})
=
(h,\mathcal{E}_1(h'), \ldots,  \mathcal{E}_1(h^{(\nu)})).
\end{equation}

\begin{lemma}\label{lem: psi hypersurface}
In the previous setting, the Zariski closures of $\operatorname{Im}(\psi)$ and $\operatorname{Im}(\widehat \psi)$ are irreducible hypersurfaces, and $\operatorname{Elim}(\Sigma,\mathbf{x})$ is a defining polynomial for each of them.
\end{lemma}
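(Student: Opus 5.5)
The plan is to compare $\psi$ and $\widehat\psi$ with the full evaluation maps $\varphi^{\mathbf{u}}$ and $\varphi$ of Proposition~\ref{prop: dim e irred}, through the projection that forgets $x_1^{(1)}$. Since $V$ is the graph of the polynomial map $(\mathbf{u}^{(\leq\nu-1)},\mathbf{x})\mapsto f_1(\mathbf{x},\mathbf{u})$, the projection $\pi:V\to\mathbb{A}^{\nu m+n}$ forgetting $x_1^{(1)}$ is an isomorphism. Its inverse is $\sigma(\mathbf{u}^{(\leq\nu-1)},\mathbf{x})=(\mathbf{u}^{(\leq\nu-1)},\mathbf{x},f_1(\mathbf{x},\mathbf{u}))$. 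In particular $V$ is irreducible, and the same holds for $\widehat V$ over $\widehat K$. Hence the closures of the images are irreducible.

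The key identity is
\[
\mathcal{E}_1(h^{(s)})\big|_{x_1^{(1)}=f_1(\mathbf{x},\mathbf{u})}=\mathcal{E}(h^{(s)}).
\]
It holds because $\mathcal{E}$ factors as $\mathcal{E}_1$ followed by the substitution $x_1^{(1)}\mapsto f_1$. To check this, note that both are $\mathbb{K}\{\mathbf{u}\}$-algebra morphisms agreeing on generators. For $x_i^{(k)}$ with $(i,k)\neq(1,1)$, the map $\mathcal{E}_1$ substitutes recursively exactly as $\mathcal{E}$ does, except at occurrences of $x_1^{(1)}$, which are left symbolic. I would prove this by induction on $k$ using the recursive definition $\mathcal{E}(x_i^{(k)})=\mathcal{E}(f_i^{(k-1)})$. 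It follows that $\psi\circ\sigma=\varphi^{\mathbf{u}}$, after the coordinates $\mathbf{u}^{(\nu)}$ are dropped. Likewise $\widehat\psi\circ\widehat\sigma=\varphi$.

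I would use the earlier remark that $\mathcal{E}(h^{(s)})$ does not involve $\mathbf{u}^{(\nu)}$ for $s\le\nu$. Then $\overline{\operatorname{Im}(\varphi^{\mathbf{u}})}=\mathbb{A}^m\times W$, where $W\subseteq\mathbb{A}^{\nu m+\nu+1}$ is the closure of the image of the map with coordinates $\mathbf{u}^{(\leq\nu-1)}$ and the $\mathcal{E}(h^{(s)})$. Since $\sigma$ is an isomorphism, $\operatorname{Im}(\psi)$ equals this reduced image. So $\overline{\operatorname{Im}(\psi)}=W$, which is a hypersurface by Proposition~\ref{prop: dim e irred}, since its product with $\mathbb{A}^m$ is one. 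Its defining polynomial is $\operatorname{Elim}(\Sigma,\mathbf{x})$, which lies in $\mathbb{K}[\mathbf{u}^{(\leq\nu-1)},y,\dots,y^{(\nu)}]$ as noted. The same argument over $\widehat K$ gives $\overline{\operatorname{Im}(\widehat\psi)}=\overline{\operatorname{Im}(\varphi)}$. The second part of Proposition~\ref{prop: dim e irred} then identifies its primitive defining equation with $\operatorname{Elim}(\Sigma,\mathbf{x})$, up to a scalar.

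I expect the main obstacle to be the careful verification of the factorization $\mathcal{E}=(\text{substitution})\circ\mathcal{E}_1$. The difficulty is that derivatives of $x_1^{(1)}$ must be handled consistently. Higher derivatives $x_1^{(k)}$ with $k\ge2$ must be evaluated through $f_1^{(k-1)}$ rather than through derivatives of the symbol $x_1^{(1)}$. The definition of $\mathcal{E}_1$ should be read this way. Once that is fixed, the rest is formal, because the maps differ only by composition with an isomorphism of the source.
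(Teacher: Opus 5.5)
Your proposal is correct and follows essentially the same route as the paper: both arguments hinge on the identity $\mathcal{E}_1(h^{(i)})=\mathcal{E}(h^{(i)})$ once $x_1^{(1)}=f_1(\mathbf{x},\mathbf{u})$ is imposed on $V$, and then reduce everything to Proposition~\ref{prop: dim e irred}. Your graph isomorphism $\sigma\colon\mathbb{A}^{\nu m+n}\to V$, together with the observation that $\operatorname{Im}(\psi)$ is exactly the image of $\varphi^{\mathbf{u}}$ with the free $\mathbf{u}^{(\nu)}$-coordinates dropped, just makes explicit what the paper summarizes as ``the rest of the proof proceeds as in Proposition~\ref{prop: dim e irred}''.
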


\begin{proof}
Observe that, by definition of $V$, the relation
$x_1^{(1)} = f_1(\mathbf{x},\mathbf{u})$
is imposed. Therefore, when restricting to $V$, the partial evaluation
morphism $\mathcal{E}_1$ behaves as the full evaluation morphism
$\mathcal{E}$ when applied to the successive derivatives of $h(x_1)$.
In particular,
\[
\mathcal{E}_1(h^{(i)}) = \mathcal{E}(h^{(i)}) \quad \text{on } V
\quad \text{for all } 1\le i \le \nu.
\]
It follows that the coordinates of $\psi$ satisfy the same algebraic
relations as in the fully evaluated case. The rest of the proof
proceeds as in Proposition~\ref{prop: dim e irred}, yielding
that $\overline{\operatorname{Im}(\psi)}$ is an irreducible hypersurface and $\operatorname{Elim}(\Sigma,\mathbf{x})$ is its defining polynomial.
Similar arguments imply the result for $\widehat \psi$. 
\end{proof}

\medskip

Now we state a lemma whose proof is routine and analogous to that of Lemma~\ref{lema tecnico}.

\begin{lemma}\label{lema estimaciones}
Let $f_1,\ldots,f_n \in \mathbb{K}[\mathbf{x},\mathbf{u}]$, $h \in \mathbb{K}[x_1] \setminus \mathbb{K}$, and let $\Sigma$ be the associated dynamical system as in \eqref{sistema con x1}. Set $\delta := \deg(f_1)$, $\delta_{\mathbf{x}} := \deg_{\mathbf{x}}(f_1)$, $\delta_{\mathbf{u}} :=  \deg_{\mathbf{u}}(f_1),$ and
$$\mathcal{D} := \max_{2 \leq i \leq n} \deg(f_i), \quad \mathcal {D}_{\mathbf{x}} := \max_{2 \leq i \leq n} \deg_{\mathbf{x}}(f_i), \quad \mathcal{D}_{\mathbf{u}} := \max_{2 \leq i \leq n} \deg_{\mathbf{u}}(f_i).$$
Assume $\delta > \mathcal{D}$, $\delta_{\mathbf{x}} > \mathcal{D}_{\mathbf{x}}$, and $\delta_{\mathbf{u}} > \mathcal{D}_{\mathbf{u}}$.
Then, for $2\le s \le \nu$, the following estimates hold:
\begin{itemize}
    \item $\deg(\mathcal{E}_1(h^{(s)})) \leq \deg(h) + (s-1) (\delta-1) + (\mathcal{D}-1)$,
    \item $\deg_{\mathbf{x},x_1^{(1)}}(\mathcal{E}_1(h^{(s)})) \leq \deg(h) + (s-1) (\delta_{\mathbf{x}}-1) + (\mathcal{D}_{\mathbf{x}} -1)$,
    \item $\deg_{\mathbf{u}^{(\leq \nu)}} (\mathcal{E}_1(h^{(s)})) \leq  (s-1) \delta_\mathbf{u} + \mathcal{D}_\mathbf{u}$.
\end{itemize}
\end{lemma}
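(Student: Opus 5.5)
Proceed by induction on $s\ge 2$, mirroring the proof of Lemma~\ref{lema tecnico}. The key structural fact is that under $\mathcal{E}_1$ the variable $x_1^{(1)}$ is never substituted. Every further derivative is expressed through the equations of $\Sigma$: $x_1^{(k)}$ for $k\ge 2$ is the $(k-1)$-st total derivative of $x_1^{(1)}$, obtained by differentiating $x_1^{(1)}$, and $x_i^{(k)}$ for $i\ge 2$ becomes $\mathcal{E}_1(f_i^{(k-1)})$.

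\textbf{Base case $s=2$.} We have $h'=h_{x_1}(x_1)\,x_1^{(1)}$, so
\[
h''=h_{x_1x_1}(x_1)\,(x_1^{(1)})^2+h_{x_1}(x_1)\,x_1^{(2)} .
\]
Under $\mathcal{E}_1$, the term $x_1^{(2)}$ becomes
\[
\mathcal{E}_1(f_1')=\partial_{\rm coef}(f_1)+\frac{\partial f_1}{\partial x_1}\,x_1^{(1)}+\sum_{i\ge2}\frac{\partial f_1}{\partial x_i}\,f_i+\nabla_{\mathbf u}f_1\cdot\mathbf u' .
\]
Bound each summand.
\begin{itemize}
\item The term $(\partial f_1/\partial x_1)\,x_1^{(1)}$ has total degree at most $\delta$, since $x_1^{(1)}$ counts as a degree-one variable.
\item The terms $(\partial f_1/\partial x_i)f_i$ have total degree at most $\delta-1+\mathcal{D}$.
\item The remaining terms have total degree at most $\delta$.
\end{itemize}
Since $\mathcal{D}\ge 1$ (we may assume this; otherwise the $f_i$ term only lowers the degree), multiplying by $h_{x_1}$ gives total degree at most $\deg(h)+(\delta-1)+(\mathcal D-1)$. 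The term $h_{x_1x_1}(x_1^{(1)})^2$ has degree at most $\deg(h)$, which is smaller. The same bookkeeping in the $(\mathbf{x},x_1^{(1)})$-degree gives $\deg(h)+\delta_{\mathbf x}-1+\mathcal{D}_{\mathbf x}-1$. In the $\mathbf{u}$-degree, $\partial f_1/\partial x_i\cdot f_i$ contributes at most $\delta_{\mathbf u}+\mathcal D_{\mathbf u}$, and $h$ contributes nothing in $\mathbf u$. This gives the stated bounds with $s-1=1$.

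\textbf{Inductive step $s-1\to s$.} Write $P=\mathcal{E}_1(h^{(s-1)})$, so that $\mathcal{E}_1(h^{(s)})=\mathcal{E}_1(P')$. Differentiating $P$ produces four kinds of terms, each controlled separately:
\begin{itemize}
\item Coefficient derivatives and the terms $\frac{\partial P}{\partial u_j^{(k)}}u_j^{(k+1)}$ do not raise any of the degrees.
\item For $i\ge2$, the term $\frac{\partial P}{\partial x_i}f_i$ raises the degree by at most $\mathcal D-1\le\delta-1$ (resp.\ $\mathcal{D}_{\mathbf x}-1$, $\mathcal{D}_{\mathbf u}$).
\item The term $\frac{\partial P}{\partial x_1}x_1^{(1)}$ does not raise the degree.
\item The term $\frac{\partial P}{\partial x_1^{(1)}}\mathcal{E}_1(f_1')$ raises the degree by at most $\delta-1+\mathcal D-1$.
\end{itemize}

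The last item is the main obstacle. It seems to add two increments per step, which would spoil the induction. To handle it, strengthen the induction hypothesis with a separate count. I will track the weighted degree in which $x_1^{(1)}$ receives weight $\delta$ (respectively $\delta_{\mathbf x}$ for the $\mathbf{x}$-degree and $\delta_{\mathbf u}$ for the $\mathbf u$-degree) instead of weight $1$. I then prove that this weighted degree of $\mathcal{E}_1(h^{(s)})$ is at most $\deg(h)+(s-1)(\delta-1)+(\mathcal D-1)$, and analogously for the other two degrees. Under this weighting, differentiating $x_1^{(1)}$ and substituting $\mathcal{E}_1(f_1')$ replaces weight $\delta$ by at most $\max(2\delta-1,\ \delta-1+\mathcal D)=2\delta-1$ when $x_1^{(1)}$ reappears. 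This is an increment of $\delta-1$ per step, as required, and it uses the hypothesis $\delta>\mathcal D$. The other substitutions raise the weighted degree by at most $\mathcal D-1<\delta-1$ or $\delta-1$.

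The base case must then be rechecked with weights. Here $h_{x_1x_1}(x_1^{(1)})^2$ has weighted degree $\deg h-2+2\delta$, which is within the bound because $\delta-1\ge\mathcal D-1$. Since the ordinary degree is at most the weighted degree, the lemma follows. The analogous argument with weights $\delta_{\mathbf x}$ and $\delta_{\mathbf u}$, using $\delta_{\mathbf x}>\mathcal D_{\mathbf x}$ and $\delta_{\mathbf u}>\mathcal D_{\mathbf u}$, gives the other two estimates.
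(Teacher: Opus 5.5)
Your diagnosis of the obstacle is right: the term $\frac{\partial P}{\partial x_1^{(1)}}\mathcal E_1(f_1')$ is what breaks the plain induction, and strengthening the induction with a weighted degree is the right idea. But with weight $\delta$ on $x_1^{(1)}$ the strengthened statement is false, so the induction never starts.

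At $s=2$, the terms $h_{x_1x_1}(x_1^{(1)})^2$ and $h_{x_1}\frac{\partial f_1}{\partial x_1}x_1^{(1)}$ in general have weighted degree $\deg h+2\delta-2$. The inequality $\deg h+2\delta-2\le\deg h+(\delta-1)+(\mathcal D-1)$ is equivalent to $\delta\le\mathcal D$, which is the opposite of the hypothesis. Your sentence ``within the bound because $\delta-1\ge\mathcal D-1$'' reverses the inequality.

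More careful bookkeeping cannot fix this. Giving $x_1^{(1)}$ the weight $\delta=\deg f_1$ simulates the full substitution $x_1^{(1)}\mapsto f_1$, which turns $\mathcal E_1(h^{(s)})$ into $\mathcal E(h^{(s)})$. Your weighted bound would therefore give $\deg\mathcal E(h^{(s)})\le\deg h+(s-1)(\delta-1)+\mathcal D-1$. Yet already for $h=x_1$, $s=2$, a monomial $c\,x_1^{\delta}$ in $f_1$ produces $\delta c^2x_1^{2\delta-1}$ in $\mathcal E(f_1')$, and nothing cancels it. Combined with your increment analysis, which is correct, the argument only yields $\deg h+s(\delta-1)$, i.e.\ Lemma~\ref{lema tecnico}.

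The repair is to give $x_1^{(1)}$ weight $\mathcal D$ instead. Use $\mathcal D_{\mathbf x}$ for the $(\mathbf x,x_1^{(1)})$-count, and $\mathcal D_{\mathbf u}$ for the $\mathbf u$-count with $\mathbf x$ of weight $0$. For $\mathcal D\ge1$ the argument goes through:
\begin{itemize}
\item $\mathcal E_1(f_1')$ has weighted degree at most $\delta-1+\mathcal D$.
\item In one differentiation--evaluation step, the term $\frac{\partial P}{\partial x_1^{(1)}}\mathcal E_1(f_1')$ raises the weighted degree by $-\mathcal D+(\delta-1+\mathcal D)=\delta-1$.
\item The terms $\frac{\partial P}{\partial x_1}x_1^{(1)}$ and $\frac{\partial P}{\partial x_i}f_i$ ($i\ge2$) raise it by $\mathcal D-1<\delta-1$, and the remaining terms do not raise it.
\item The base case gives $\max(\deg h-2+2\mathcal D,\ \deg h+\delta+\mathcal D-2)=\deg h+(\delta-1)+(\mathcal D-1)$, now using $\delta>\mathcal D$ in the right direction.
\item Ordinary degree is at most weighted degree because $\mathcal D\ge1$.
\end{itemize}
For the $\mathbf u$-count, the increments are $\max(\mathcal D_{\mathbf u},\delta_{\mathbf u})=\delta_{\mathbf u}$ and the base case is $\max(2\mathcal D_{\mathbf u},\delta_{\mathbf u}+\mathcal D_{\mathbf u})=\delta_{\mathbf u}+\mathcal D_{\mathbf u}$.

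Finally, $\mathcal D\ge1$ is not a harmless normalization as you suggest. Take $n=2$, $f_1=x_1^2+x_2^2+u$, $f_2=1$, $h=x_1$. All stated hypotheses hold and $\nu=2$, but $\mathcal E_1(h'')=2x_1x_1^{(1)}+2x_2+u'$ has degree $2$, while the first two bounds give $1$. So $\mathcal D\ge1$ and $\mathcal D_{\mathbf x}\ge1$ are genuinely needed and should be stated as extra hypotheses.
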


Combining Corollary~\ref{Coro: Solo x1} and Proposition~\ref{Coro: Dandrea} applied to the polynomial maps $\psi$ and $\widehat \psi$,  we obtain:

\begin{proposition}\label{proposición solo x1 general}
Let $f_1,\ldots,f_n \in \mathbb{K}[\mathbf{x},\mathbf{u}]$,
$h \in \mathbb{K}[x_1]\setminus\mathbb{K}$, and let $\Sigma$ be the associated
dynamical system as in \eqref{sistema con x1}, with output $y=h(x_1)$. Let
$\nu:=\operatorname{ord}(\operatorname{Elim}(\Sigma,\mathbf{x}))$. 
Set $d:= \deg(h)$, $\delta:= \deg(f_1)$,  $\delta_{\mathbf{x}}:= \deg_{\mathbf{x}}(f_1)$,  $\delta_{\mathbf{u}}:= \deg_{\mathbf{u}}(f_1)$, $\mathcal{D} := \max_{2 \leq i \leq n} \deg(f_i)$, $\mathcal{D}_{\mathbf{x}} := \max_{2 \leq i \leq n} \deg_{\mathbf{x}}(f_i)$, $ \mathcal{D}_{\mathbf{u}} := \max_{2 \leq i \leq n} \deg_{\mathbf{u}}(f_i).$
Assume $\delta > \mathcal{D}$, $\delta_{\mathbf{x}} > \mathcal{D}_{\mathbf{x}}$, and $\delta_{\mathbf{u}} > \mathcal{D}_{\mathbf{u}}$,
and let 
\begin{align*}
\Delta := \displaystyle \prod_{i=0}^\nu \bigl(d + i(\delta-1)\bigr) , &\quad  \Delta_\mathbf{x}:= \displaystyle \prod_{i=0}^\nu \bigl(d + i(\delta_{\mathbf{x}}-1)\bigr),\\
 \widetilde \Delta:= d^2 \prod_{i=2}^{\nu}( d+(i-1)(\delta-1) +\mathcal{D}-1), &\quad 
\widetilde\Delta_{\mathbf{x}}:=d^2 \prod_{i=2}^{\nu} (d+(i-1)(\delta_\mathbf{x}-1)+\mathcal{D}_\mathbf{x}-1).
\end{align*}
Then every monomial $\displaystyle\Big(\prod_{k=0}^{\nu-1}\prod_{j=1}^m
(u_j^{(k)})^{\ell_{j,k}}\Big)
\prod_{i=0}^{\nu}(y^{(i)})^{e_i}$
appearing in $\operatorname{Elim}(\Sigma,\mathbf{x})$ satisfies:

\begin{enumerate}
\item $\displaystyle \sum_{k=0}^{\nu-1}\sum_{j=1}^m \ell_{j,k}
+\sum_{i=0}^{\nu}
\bigl(d+i(\delta-1)\bigr)e_i
\leq
\Delta, $
\label{ecuacion 15}

\item $\displaystyle  \sum_{k=0}^{\nu-1}\sum_{j=1}^m \ell_{j,k}
+d (e_0+e_1)
+\sum_{i=2}^{\nu} (d+(i-1)(\delta-1) +\mathcal{D}-1) e_i
\leq
\delta \widetilde \Delta,$
\label{ecuacion 16}

\item $\displaystyle \sum_{i=0}^{\nu} \bigl(d+i(\delta_\mathbf{x}-1)\bigr)e_i \leq \Delta_{\mathbf{x}}$,
\label{ecuacion 17}

\item $\displaystyle d (e_0+e_1) +\sum_{i=2}^{\nu} (d+(i-1)(\delta_\mathbf{x}-1)+\mathcal{D}_\mathbf{x}-1) e_i
\leq
\delta_\mathbf{x} \widetilde\Delta_\mathbf{x},$
\label{ecuacion 18}

\item $\displaystyle \sum_{k=0}^{\nu-1}\sum_{j=1}^m \ell_{j,k}
+\delta_\mathbf{u}\sum_{i=1}^{\nu} i e_i
\leq
\sum_{l=1}^{\nu} \frac{l\delta_\mathbf{u} \Delta_\mathbf{x}} {d+l(\delta_\mathbf{x}-1)}$,
\label{ecuacion 19}

\item $\displaystyle \sum_{k=0}^{\nu-1}\sum_{j=1}^m \ell_{j,k}
+\sum_{i=2}^{\nu}
\bigl((i-1)\delta_\mathbf{u}+\mathcal{D}_\mathbf{u}\bigr)e_i
\leq \widetilde \Delta_{\mathbf{x}}
\Big(\delta_\mathbf{u} +\delta_\mathbf{x}
\sum_{l=1}^{\nu-1}
\frac{l\delta_\mathbf{u}+\mathcal{D}_\mathbf{u}}{d+l(\delta_\mathbf{x}-1)+\mathcal{D}_\mathbf{x}-1} \Big)$,
\label{ecuacion 20}

\item $\displaystyle \sum_{j=1}^m \ell_{j,r}
+\sum_{i=r+1}^{\nu}
\left\lfloor\frac{i-1}{r}\right\rfloor e_i
\leq
\sum_{l=r+1}^{\nu} \frac{\lfloor(l-1)/{r}\rfloor \Delta_{\mathbf{x}}}
{d+l(\delta_\mathbf{x}-1)}
$, for $1\leq r\leq \nu-1$.
\label{ecuacion 21}
\end{enumerate}
\end{proposition}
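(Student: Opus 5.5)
Inequalities \ref{ecuacion 15}, \ref{ecuacion 17}, \ref{ecuacion 19} and \ref{ecuacion 21} will be read off from Corollary~\ref{Coro: Solo x1}. Inequalities \ref{ecuacion 16}, \ref{ecuacion 18} and \ref{ecuacion 20} will come from applying Proposition~\ref{Coro: Dandrea} to the partial-evaluation maps $\psi$ and $\widehat\psi$, with Lemma~\ref{lem: psi hypersurface} guaranteeing that $\operatorname{Elim}(\Sigma,\mathbf{x})$ defines both image closures.

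For the first group, the hypotheses $\delta>\mathcal{D}$, $\delta_\mathbf{x}>\mathcal{D}_\mathbf{x}$ and $\delta_\mathbf{u}>\mathcal{D}_\mathbf{u}$ give $D=\delta$, $D_\mathbf{x}=\delta_\mathbf{x}$ and $D_\mathbf{u}=\delta_\mathbf{u}$. The weights of Corollary~\ref{Coro: Solo x1} then simplify:
\[
d+\delta-1+(i-1)(D-1)=d+i(\delta-1),\qquad \delta_\mathbf{u}+(i-1)D_\mathbf{u}=i\delta_\mathbf{u}.
\]
Similarly, the products defining $\Delta,\Delta_\mathbf{x}$ there become $\prod_{i=0}^\nu(d+i(\delta-1))$ and $\prod_{i=0}^\nu(d+i(\delta_\mathbf{x}-1))$. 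This yields \ref{ecuacion 15} and \ref{ecuacion 17} directly. For \ref{ecuacion 19}, the $e_0$ term has weight $0$ because $h$ does not involve $\mathbf{u}$. Item 3 of the Corollary, after the shift $l\mapsto l-1$ in the summation index and dropping the vanishing $l=0$ term, becomes $\sum_{l=1}^\nu l\delta_\mathbf{u}\Delta_\mathbf{x}/(d+l(\delta_\mathbf{x}-1))$. Likewise, item 4 with the shift $l\mapsto l-1$ gives \ref{ecuacion 21}.

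For the second group, I apply Proposition~\ref{Coro: Dandrea} to $\psi$ on $V$ in \eqref{eq:defV}. The variety $V$ is the graph hypersurface $x_1^{(1)}=f_1$, which is irreducible of degree $\max(1,\delta)=\delta$. The weights are as follows:
\begin{itemize}
\item each coordinate $u_j^{(k)}$ gets weight $1$;
\item the coordinate $h$ gets $d$;
\item the coordinate $\mathcal{E}_1(h')=h'(x_1)\,x_1^{(1)}$ gets $d$ as well, since $(d-1)+1=d$;
\item for $2\le s\le\nu$, the coordinate $\mathcal{E}_1(h^{(s)})$ gets $d+(s-1)(\delta-1)+\mathcal{D}-1$, by Lemma~\ref{lema estimaciones}.
\end{itemize}
The product of the $y$-weights is exactly $\widetilde\Delta$, and the $u$-weights contribute factors $1$, so the first bullet of Proposition~\ref{Coro: Dandrea} gives \ref{ecuacion 16}.

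For \ref{ecuacion 18} and \ref{ecuacion 20}, I use $\widehat\psi$ on $\widehat V$ over $\widehat K$. Here $\deg(\widehat V)$ is the $(\mathbf{x},x_1^{(1)})$-degree of $x_1^{(1)}-f_1$, namely $\delta_\mathbf{x}$. The $\mathbf{x}$-weights are $d,d$ and then $d+(s-1)(\delta_\mathbf{x}-1)+\mathcal{D}_\mathbf{x}-1$ from Lemma~\ref{lema estimaciones}, whose product is $\widetilde\Delta_\mathbf{x}$; the first bullet gives \ref{ecuacion 18}. For \ref{ecuacion 20}, I take $\mathbf{u}^{(\leq\nu-1)}$ as a single parameter block. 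The height of the hypersurface $\widehat V$ with respect to this block equals the $\mathbf{u}$-degree of its primitive defining polynomial, which is $\delta_\mathbf{u}$. The $\mathbf{u}$-weights are $b_0=b_1=0$ and $b_s=(s-1)\delta_\mathbf{u}+\mathcal{D}_\mathbf{u}$ for $s\ge2$. Substituting into the second bullet and reindexing $l=s-1$ gives $\widetilde\Delta_\mathbf{x}\bigl(\delta_\mathbf{u}+\delta_\mathbf{x}\sum_{l=1}^{\nu-1}\cdots\bigr)$, as stated.

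The step I expect to need the most care is verifying the hypotheses of Proposition~\ref{Coro: Dandrea} for $\psi$ and $\widehat\psi$. All coordinates must be non-constant in $\mathbf{x}$; for the $\mathbf{u}$-coordinates of $\psi$ I will regard them as coordinate functions on $V$, as was done for $\varphi^\mathbf{u}$ in Theorem~\ref{mainthm}. The degree and height of the graph hypersurfaces must be as computed above. The other delicate point is the proof of Lemma~\ref{lema estimaciones}: one must check that keeping $x_1^{(1)}$ unevaluated means the first differentiation-evaluation step costs only $\mathcal{D}-1$, coming from $x_i'$ with $i\ge2$, while every later step costs at most $\delta-1$. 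I would prove this by induction on $s$, mirroring Lemma~\ref{lema tecnico} and tracking separately the contributions of $x_1'$ and of the other $x_i'$.
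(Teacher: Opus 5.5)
Your proposal follows the paper's proof step by step. Inequalities 1, 3, 5, 7 are read off from Corollary~\ref{Coro: Solo x1} with $D=\delta$, $D_\mathbf{x}=\delta_\mathbf{x}$, $D_\mathbf{u}=\delta_\mathbf{u}$. Inequalities 2, 4, 6 come from Proposition~\ref{Coro: Dandrea} applied to $\psi$ and $\widehat\psi$, with $\deg V=\delta$, $\deg\widehat V=\delta_\mathbf{x}$, $h_{\mathbf{u}^{(\le\nu-1)}}(\widehat V)=\delta_\mathbf{u}$ and the weights of Lemma~\ref{lema estimaciones}. Your explicit bookkeeping ($b_0=b_1=0$, the reindexing $l=s-1$) is correct and more detailed than the paper's.

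However, the step you flag as delicate fails in a case the hypotheses allow, and the paper has the same gap. Lemma~\ref{lema estimaciones} is false when $\mathcal{D}=0$ or $\mathcal{D}_\mathbf{x}=0$. Take $x_1'=x_1^2+x_2^2+u$, $x_2'=1$, $y=x_1$, so that $d=1$, $\delta=\delta_\mathbf{x}=2$, $\delta_\mathbf{u}=1$, $\mathcal{D}=\mathcal{D}_\mathbf{x}=\mathcal{D}_\mathbf{u}=0$ and $\nu=2$; all hypotheses hold.
\begin{itemize}
\item $\mathcal{E}_1(h'')=2x_1x_1^{(1)}+2x_2+u'$ has degree $2$, whereas the lemma claims $d+(\delta-1)+(\mathcal{D}-1)=1$.
\item The eliminant is $(y''-2yy'-u')^2-4(y'-y^2-u)$, which contains $y^2(y')^2$. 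For this monomial the left-hand sides of inequalities 2 and 4 equal $4$, while $\delta\widetilde\Delta=\delta_\mathbf{x}\widetilde\Delta_\mathbf{x}=2$.
\end{itemize}

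Your heuristic (first step costs $\mathcal{D}-1$, every later step at most $\delta-1$) does become a correct induction if you give $x_1^{(1)}$ weight $\mathcal{D}$ (resp.\ $\mathcal{D}_\mathbf{x}$ for the $\mathbf{x}$-degree) and all other variables weight $1$.
\begin{itemize}
\item The substitutions $x_1'\mapsto x_1^{(1)}$ and $x_i'\mapsto f_i$ ($i\ge 2$) cost at most $\mathcal{D}-1$.
\item The substitution $x_1^{(2)}\mapsto\mathcal{E}_1(f_1')$ costs at most $\delta-1$, because $\partial/\partial x_1^{(1)}$ lowers the weight by $\mathcal{D}$.
\item By contrast, a plain total-degree induction mirroring Lemma~\ref{lema tecnico} only gives $\delta+\mathcal{D}-2$ for that substitution, which is too much when $\mathcal{D}\ge 2$.
\end{itemize}
Passing from this weighted degree back to the ordinary degree requires $\mathcal{D}\ge 1$ (resp.\ $\mathcal{D}_\mathbf{x}\ge 1$), and this is exactly where the example breaks.

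So your argument proves inequalities 2, 4, 6 only under that extra hypothesis. In general it proves them after replacing $\mathcal{D},\mathcal{D}_\mathbf{x}$ by $\max(\mathcal{D},1),\max(\mathcal{D}_\mathbf{x},1)$ both in the weights and in $\widetilde\Delta,\widetilde\Delta_\mathbf{x}$.
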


\begin{proof}
Inequalities 1, 3, 5, and 7 follow directly from
Corollary~\ref{Coro: Solo x1}, since under our assumptions, 
$\delta = \max_{1\le i \le n} \deg(f_i)$, $\delta_{\mathbf{x}} = \max_{1\le i \le n} \deg_{\mathbf{x}}(f_i)$ and $\delta_{\mathbf{u}} = \max_{1\le i \le n} \deg_{\mathbf{u}}(f_i)$.

To prove the second, fourth and sixth inequalities, we apply  Proposition \ref{Coro: Dandrea} to the mappings $\psi$ and $\widehat{\psi}$ introduced in \eqref{eq:defpsi} and \eqref{eq:defhatpsi}, respectively. Since both $V$ and $\widehat{V}$ are hypersurfaces (see  \eqref{eq:defV} and \eqref{eq:defhatV}), we have that
\begin{equation*}
    \deg(V) = \delta,
\qquad
\deg(\widehat{V}) = \delta_\mathbf{x},
\qquad
h_{\mathbf{u}^{(\leq \nu-1)}}(\widehat V) = \delta_\mathbf{u}.
\end{equation*}
Using these degree and height values, together with the degree estimates from Lemma \ref{lema estimaciones}, the inequalities follow in the same fashion as in Theorem \ref{mainthm}. 
\end{proof}

\bigskip

A natural question that arises from Proposition~\ref{proposición solo x1 general} is whether all the obtained inequalities are non-redundant or some of them can be derived from the others (c.f. the discussion after the proof of Theorem \ref{mainthm}). We illustrate the non-redundant behaviour of our result with two complementary examples. In both cases we take $\nu=n=2$. In each case, we fix a degree pattern and consider the  inequalities stated in the Proposition for those degrees. In the tables below, the second column contains the number of monomials with exponents satisfying the active inequalities: the first line corresponds to the entire system, while in the remaining ones, we remove one inequality at each time. 

\begin{center}
\setlength{\tabcolsep}{4pt}
\renewcommand{\arraystretch}{1}
\begin{minipage}{0.5\textwidth}
\centering
\begin{tabular}{|l|r|}
\hline
\multicolumn{2}{|c|}{\textbf{Example A}} \\
\hline
\multicolumn{2}{|c|}{$
\mathcal{D}=3,\ 
\mathcal{D}_{\mathbf{x}}=2,\
\mathcal{D}_{\mathbf{u}}=2
$} \\
\multicolumn{2}{|c|}{$
\delta=5,\
\delta_{\mathbf{x}}=3,\
\delta_{\mathbf{u}}=4,\
d=1
$} \\
\hline
\textbf{Active inequalities} & \textbf{Monomials} \\
\hline
All inequalities & 7152 \\
Without \eqref{ecuacion 15} & 7317 \\
Without \eqref{ecuacion 16} & 7922 \\
Without \eqref{ecuacion 17} & 9749 \\
Without \eqref{ecuacion 18} & 7628 \\
Without \eqref{ecuacion 19} & 7152 \\
Without \eqref{ecuacion 20} & 7156 \\
Without \eqref{ecuacion 21} & 29844 \\
\hline
\end{tabular}
\end{minipage}
\hfill
\begin{minipage}{0.49\textwidth}
\centering
\begin{tabular}{|l|r|}
\hline
\multicolumn{2}{|c|}{\textbf{Example B}} \\
\hline
\multicolumn{2}{|c|}{$
\mathcal{D} =1,\
\mathcal{D}_{\mathbf{x}}=1,\
\mathcal{D}_{\mathbf{u}}=1
$} \\
\multicolumn{2}{|c|}{$
\delta=4,\
\delta_{\mathbf{x}}=2,\
\delta_{\mathbf{u}}=2,\
d=1
$} \\
\hline
\textbf{Active inequalities} & \textbf{Monomials} \\
\hline
All inequalities & 464 \\
Without \eqref{ecuacion 15} & 464 \\
Without \eqref{ecuacion 16} & 464 \\
Without \eqref{ecuacion 17} & 519 \\
Without \eqref{ecuacion 18} & 569 \\
Without \eqref{ecuacion 19} & 470 \\
Without \eqref{ecuacion 20} & 563 \\
Without \eqref{ecuacion 21} & 1032 \\
\hline
\end{tabular}
\end{minipage}
\end{center}

In Example A, all inequalities except
\eqref{ecuacion 19} are non-redundant, and Example B shows that
\eqref{ecuacion 19} is also non-redundant. Therefore, no
inequality in Proposition~\ref{proposición solo x1 general} can be removed
from the system of bounds.

\medskip

On the other hand, in the nonparametric setting, the relevant inequalities in
Proposition~\ref{proposición solo x1 general} are those involving only
degrees in the variables $\mathbf{x}$, namely inequalities  \eqref{ecuacion 17} and \eqref{ecuacion 18}, which are both valid under the assumption
$\delta_\mathbf{x}>\mathcal D_\mathbf{x}\geq0$. For
$n=\nu=2$, these inequalities become:
\begin{align*}
     d e_0 +  (d+\delta_{\mathbf{x}}-1)e_1 + (d+ 2\delta_\mathbf{x}-2)e_2 & \leq d (d+\delta_{\mathbf{x}}-1)(d + 2\delta_\mathbf{x}-2), \\
    d e_0 + d e_1 + (d+ \mathcal{D}_{\mathbf{x}}+ \delta_\mathbf{x}-2)e_2 & \leq \delta_\mathbf{x} d^2(d+ \mathcal{D}_{\mathbf{x}}+ \delta_\mathbf{x}-2).
\end{align*}
When $\mathcal{D}_\mathbf{x}=d=1$, these inequalities recover the Newton polytope of the eliminant of a generic system  (see \cite[Theorem 2]{mukhina2025projectingdynamicalsystemssupport} or Example \ref{Ejemplo}) and whenever only $d=1$, they give a sharp approximation of the actual Newton polytope of the eliminant, as the following table illustrates: 

\begin{center}
\begin{tabular}{|c|c|c|c|}
\hline
$[\delta_\mathbf{x},\mathcal{D}_\mathbf{x}]$&Proposition \ref{proposición solo x1 general} & Newton Polytope & \% \\
\hline
$[2,1]$ & 19  & 19 & 100\%\\
$[3,1]$ & 67 & 67 & 100\%\\
$[3,2]$ & 80 & 77 & 96\% \\
$[4,2]$ & 210 & 201 & 95\% \\
$[8,3]$ & 2758  & 2617 & 95\% \\
$[15,2]$ & 28480 & 27921 & 98\% \\
$[26,3]$ & 250183 & 244531 & 97\% \\
\hline
\end{tabular}
\end{center}

In the second column, we count the number of possible monomials in $\operatorname{Elim}(\Sigma, \mathbf{x})$ according to our results for a system with  $[\delta_{\mathbf{x}}, \mathcal{D}_{\mathbf{x}}]$ as in the first column. The third column shows the actual number of points in the Newton polytope of the eliminant of a system with generic coefficients with degrees $[\delta_\mathbf{x},D_\mathbf{x}]$ as in the first column, which is characterized in \cite[Theorem 2]{mukhina2025projectingdynamicalsystemssupport}.

\section{Experimental results} \label{sec:experiments}

In this section we illustrate the bounds obtained in Theorem~\ref{mainthm}
by means of computational experiments. All experiments were carried out on a standard workstation (AMD Ryzen 7 7840HS, 32\,GB RAM, AMD Radeon 780M Graphics (3 GB)).

We consider polynomial dynamical systems $\Sigma$ as in \eqref{def: difsystem}.
We use the notation for degrees introduced in Theorem~\ref{mainthm}, namely
\begin{align*}
D_{\mathbf{x}} = \max_{1 \leq i \leq n} \deg_{\mathbf{x}}(f_i), \quad & D_{\mathbf{u}}= \max_{1 \leq i \leq n} \deg_{\mathbf{u}}(f_i), \quad D = \max_{1 \leq i \leq n} \deg(f_i), \\
d_\mathbf{x}= \deg_\mathbf{x}(g), \quad & d_\mathbf{u}= \deg_\mathbf{u}(g), \quad d= \deg(g).
\end{align*}

\subsection{Algorithm and strategy}

We first recall that the case without parameters $\mathbf{u}$, namely \(m=0\), was already studied in \cite{10.1007/978-3-032-09645-6_15}, where extensive computational experiments showed that the bound in Theorem \ref{mainthm} — which, in this case, coincides with the bound in \cite[Corollary 1]{10.1007/978-3-032-09645-6_15} — is sharp, in the sense that it correctly predicts the Newton polytope of the corresponding eliminant.

We now turn to the case where parameters \(\mathbf{u}\) are present. In this setting, direct computations over fields of characteristic zero quickly become infeasible due to the rapid growth of the eliminant. Although in the non-parametric case this difficulty can still be handled by standard Gröbner basis elimination in \textsc{Singular}, the introduction of parameters leads to a substantial increase in complexity.

To overcome these limitations, we extended to the differential-parameter setting the evaluation--interpolation algorithm due to Mukhina and Pogudin (see \cite{10.1007/978-3-032-09645-6_15}, \cite{mukhina2025projectingdynamicalsystemssupport}, \cite{10.1145/3747199.3747564}), originally developed for systems with constant parameters, and implemented it in \textsc{Julia}. Following their approach, we construct a candidate support using the degree bounds developed in the previous sections and reconstruct the eliminant by interpolation over a finite field.
This reconstruction procedure is summarized in Algorithm \ref{alg:interpolation}, which follows the approach described in \cite[Section 8]{10.1007/978-3-032-09645-6_15}, based on \cite[Algorithm 1]{mukhina2025projectingdynamicalsystemssupport}.

\begin{algorithm}[H] 
\small{
\caption{Evaluation--Interpolation Eliminant Reconstruction}
\label{alg:interpolation}
\begin{algorithmic}[1]
\State \textbf{Input:} ODE system $\Sigma$, prime number $p$
\State \textbf{Output:} Eliminant $E$ modulo $p$
\State Compute the degree data $(D_\mathbf{x},D_\mathbf{u},D,d_\mathbf{x},d_\mathbf{u},d)$.
\State Construct the candidate support $S$ using the inequalities from Theorem \ref{mainthm}.
\State Reduce the system modulo $p$.
\State For $i=0,\dots, n$, compute $g_i:=\mathcal{E}(g^{(i)})$.

\State Choose \(N:=\#S\) random points $\xi_1,\ldots,\xi_N$
in the space with coordinates \((\mathbf{x},\mathbf{u}^{(\leq n)})\)
\State For $1\le k\le N$, evaluate

$\displaystyle E\bigl(\mathbf{u}^{(\leq n)}(\xi_k),g_{0}(\xi_k),\ldots,g_{n}(\xi_k)\bigr)
    =
    \sum\limits_{I\in S}\alpha_I
    \bigl(\mathbf{u}^{(\leq n)}(\xi_k),g_0(\xi_k),\ldots,g_n(\xi_k)\bigr)^I$.

\noindent Let $M$ be the $N\times N$ matrix of the resulting homogeneous linear system in the coefficients \(\alpha_I\). 

\State Compute a basis \(\{v_1,\ldots,v_r\}\) of \(\ker(M)\). 
\If{\(r=1\)}
    \State Recover a candidate eliminant \(E\) from \(v_1\).
\ElsIf{\(r>1\)}
    \State Recover the polynomials \(E_1,\ldots,E_r\) associated with  \(v_1,\ldots,v_r\).
    \State Choose $E$ from $\{ E_1,\ldots,E_r\}$ of minimal order and, among them, of minimal total degree.
\Else
    \State \Return failure.
\EndIf
\State $\mathsf{verification}:=
\mathsf{Verify}\!\left(
E\bigl(\mathbf{u}^{(\leq n)},
\mathcal{E}(g^{(0)}),\ldots,\mathcal{E}(g^{(n)})\bigr)=0
\right)$.
\If{$\mathsf{verification}=\mathrm{true}$}
    \State \Return $E$
\Else
    \State \Return failure.
\EndIf
\end{algorithmic}}
\end{algorithm}

\subsection{Computations}

For each choice of degrees:
\[
    [D_{\mathbf{x}},D_{\mathbf{u}},D], \qquad
    [d_{\mathbf{x}},d_{\mathbf{u}},d],
\]
we construct systems with the largest possible support in
\((\mathbf{x},\mathbf{u})\) and $n=2$ state variables $x_1,x_2$ compatible with these degree constraints.
We repeat the experiments for different random choices of the generic system
\(\Sigma\) and for increasing primes \(p\). The reported support sizes correspond
to the outcomes that remain stable under these variations.

To illustrate the practical impact of the support bounds in the evaluation-interpolation procedure efficiency, consider dense systems and degree data
\[
[D_\mathbf{x},D_\mathbf{u},D] = [2,2,2],
\qquad
[d_\mathbf{x},d_\mathbf{u},d] = [2,2,2].
\]
For a system with one constant parameter, Theorem~\ref{mainthm} predicts a support consisting of $1292$ monomials, which coincides with the exact support of the eliminant. In contrast, the support generated by the implementation of Mukhina and Pogudin's method in \texttt{DiffMinPoly} contains $12007$ monomials.

The difference becomes even more pronounced when two constant parameters are present. In this case, our bound still predicts the exact support size of $8189$ monomials, whereas the support implemented in \texttt{DiffMinPoly} (provided by \cite[Theorem 1]{10.1007/978-3-032-09645-6_15}) contains $437228$ monomials. Moreover, the gap between the two supports increases further as the number of parameters grows.

Since the evaluation-interpolation algorithm constructs a square interpolation matrix whose size equals the cardinality of the candidate support, the quality of the candidate support  has a direct impact on the complexity of the computation. The examples discussed above, together with the tables below, show that, in the constant-parameter setting, the bounds of Theorem~\ref{mainthm} lead to significantly smaller interpolation problems than the bounds from \cite[Theorem 1]{10.1007/978-3-032-09645-6_15} and  provide an extremely accurate description of the eliminant support.

\medskip

\begin{table}[H]
\centering
\begin{tabular}{|c|c|c|c|c|}
\hline
$[D_\mathbf{x},D_\mathbf{u},D]$ & $[d_\mathbf{x},d_{\mathbf{u}},d]$
& \multicolumn{2}{c|}{\# of terms} & \% \\
\cline{3-4}
 & & Theorem~\ref{mainthm} & $\operatorname{Elim}(\Sigma,\mathbf{x})$ & \\
\hline
$[1,1,1]$ & $[1,1,1]$ & 5 & 5 & 100\% \\
$[1,1,1]$ & $[2,2,2]$ & 105 & 105 & 100\% \\
$[1,1,1]$ & $[3,3,3]$ & 1705 & 1705 & $100\%$ \\
$[1,1,1]$ & $[4,4,4]$ & 16473 & 16473 & $100\%$ \\
\hline
$[2,1,2]$ & $[2,0,2]$ & 1282 & $ 1222$ & $ 95\%$ \\
$[3,3,3]$ & $[1,1,1]$ & 469 & 469 & 100\% \\
$[2,2,2]$ & $[1,1,1]$ & 64 & 64 & 100\% \\
$[2,2,2]$ & $[2,2,2]$ & 1292 & $1292$ & $ 100\%$ \\
$[3,1,3]$ & $[2,0,2]$ & 7266 & 6752 & $93\%$ \\
$[3,2,3]$ & $[2,1,2]$ & 7875 & 7805 & $99\%$ \\
$[3,3,3]$ & $[2,2,2]$ & 7875 & 7875 & $100\%$ \\

\hline
\end{tabular}
\label{tabla 1}\caption*{Constant-parameter case ($m=1$, $\mathbf{u}'=0$)}
\end{table}

\begin{table}[H]
\centering
\begin{tabular}{|c|c|c|c|c|}
\hline
$[D_\mathbf{x},D_\mathbf{u},D]$ & $[d_\mathbf{x},d_\mathbf{u},d]$
& \multicolumn{2}{c|}{\# of terms} & \% \\
\cline{3-4}
 & & Theorem~\ref{mainthm} & $\operatorname{Elim}(\Sigma,\mathbf{x})$ & \\
\hline
$[1,1,1]$ & $[1,1,1]$ & 6 & 6 & 100\% \\
$[1,1,1]$ & $[2,2,2]$ & 295 & 295 & 100\% \\
$[2,2,2]$ & $[1,1,1]$ & 155 & 155 & $100\%$ \\
$[2,2,2]$ & $[2,2,2]$ & 8189 & 8189 & $100\%$ \\

\hline
\end{tabular}
\label{tabla 2}\caption*{Constant-parameter case ($m=2$, $\mathbf{u}'=0$)}
\end{table}

A remark we can extract from the computed data is that in the constant parameter case, and for sufficiently dense systems ($d_{\mathbf{x}} = d_{\mathbf{u}} =  d$, $D_{\mathbf{x}} = D_{\mathbf{u}} = D$),
the inequality given in Theorem~\ref{mainthm},
\[
\sum_{j=1}^m \ell_j
+ \sum_{i=0}^\nu \bigl(d + i(D-1)\bigr) e_i
\;\le\;
\prod_{i=0}^\nu \bigl(d + i(D-1)\bigr),
\]
appears to describe exactly the Newton polytope of the eliminant.  

\bigskip

 We now turn to the differential-parameter setting. In contrast with the constant-parameter case, derivatives of the inputs contribute additional variables to the eliminant, causing a much faster growth of the candidate support and making the reconstruction problem substantially more challenging. The following experiments illustrate both the effectiveness and the limitations of the proposed bounds in this more general situation.

\begin{table}[H]
\centering
\begin{tabular}{|c|c|c|c|c|}
\hline
$[D_\mathbf{x},D_\mathbf{u},D]$ & $[d_\mathbf{x},d_\mathbf{u},d]$
& \multicolumn{2}{c|}{\# of terms} & \% \\
\cline{3-4}
 & & Theorem~\ref{mainthm} & $\operatorname{Elim}(\Sigma,\mathbf{x})$ & \\
\hline
$[1,1,1]$ & $[1,0,1]$ & 7 & 6 & 85\% \\
$[1,0,1]$ & $[1,1,1]$ & 7 & 7 & 100\% \\
$[1,1,1]$ & $[1,1,1]$ & 7 & 7 & 100\% \\
\hline
$[1,1,1]$ & $[2,0,2]$ & 697 & 254 & 36\% \\
$[1,1,1]$ & $[2,1,2]$ & 697 & 655 & 94\% \\
$[1,0,1]$ & $[2,2,2]$ & 697 & 675 & 98\% \\
$[1,1,1]$ & $[2,2,2]$ & 697 & 675 & 98\% \\
\hline
$[2,2,2]$ & $[1,1,1]$ & 292 & 115 & 39\% \\
$[3,3,3]$ & $[1,1,1]$ & 5722 & 1144 & 20\% \\
$[3,2,3]$ & $[1,1,1]$ & 5722 & 1144 & 20\% \\
$[3,3,3]$ & $[1,0,1]$ & 5722 & 865 & 15\% \\
$[2,2,2]$ & $[2,1,2]$ & 33779 & 9717 & 29\% \\
$[2,1,2]$ & $[2,2,2]$ & 33779 & 9971 & 30\% \\
$[2,2,2]$ & $[2,2,2]$ & 33779 & 9971 & 30\% \\
\hline
\end{tabular}
\caption*{Differential-parameter case ($m=1$)}
\end{table}

The differential-parameter experiments show that the bounds from Theorem \ref{mainthm} are not as sharp as those obtained in the constant parameter setting. Moreover, the table above shows not only that our candidate support grows as the degrees increase, but also that the size of the eliminant grows as well. As a result, it becomes computationally challenging  to determine how sharp the bounds from Theorem \ref{mainthm} are asymptotically as the degrees grow.

\section{Conclusions and future work}\label{sec:conclusions}

The theoretical results presented in this paper (see Section \ref{sec 2}) provide the first descriptions of the support of input-output equations for differential-algebraic dynamical systems with differential inputs of the form \eqref{ecuacion sistema introduccion}. Our approach is based on an analogue of Perron's elimination theorem, that relies on the notion of height of algebraic varieties  over a function field developed in \cite{D2013}, which allows us to recover previous results by Mukhina and Pogudin (\cite{10.1007/978-3-032-09645-6_15},  \cite{mukhina2025projectingdynamicalsystemssupport}) and extend them to the setting where differential parameters are present.

The computational experiments presented in Section \ref{sec:experiments} show that implementations based on classical algebraic elimination algorithms are severely limited in this context, due to the rapid growth of both the candidate supports and the eliminant polynomials. Our computations suggest that evaluation--interpolation methods provide a more suitable framework for computing eliminant polynomials. 

In the constant-parameter setting, the support bounds developed in this work lead to highly efficient reconstructions and, in all tested examples, accurately predict the exact support of the eliminant.
The differential-parameter setting turns out to be more challenging, and our estimates in this direction are noticeably less sharp. This can be explained by the fact that derivatives introduce highly structured and non-generic degree patterns as they appear iteratively. It is reasonable to expect that a behavior similar to the constant-parameter case would emerge if one considers systems where the parameters and their derivatives are treated more symmetrically. It remains an open question to obtain more accurate descriptions of the support of eliminant polynomials in the differential-parameter setting.

Our support bounds are stated in terms of the degrees of the system, but neither Theorem \ref{mainthm} nor Proposition \ref{proposición solo x1 general} take sparsity patterns into account. Some computational results in this direction can be found in \cite{10.1145/3747199.3747564}. A further analysis of the problem in the sparse setting (particularly, a better understanding of the structure of successive derivatives when the equations contain differential parameters) is needed to prove general sharp results following this approach.

Finally, we point out that when differential inputs are considered, the size of the candidate support, and hence of the corresponding interpolation matrices, grows rapidly and eventually becomes the main computational bottleneck.
This suggests that further progress from the algorithmic viewpoint will require not only sharper support bounds, but also more refined evaluation-interpolation strategies.

\section{Declaration of generative AI and AI-assisted technologies in the manuscript preparation process}
During the preparation of this work, the authors used OpenAI’s ChatGPT to assist with the implementation of Algorithm 1. After using this tool, the authors reviewed and edited the resulting content as needed and take full responsibility for the content of the published article.

\printbibliography

@article{D2013,
  author       = {D’Andrea, Carlos and Krick, Teresa and Sombra, Martín},
  title        = {Heights of varieties in multiprojective spaces and arithmetic Nullstellensätze},
  journal      = {Annales scientifiques de l'École Normale Supérieure},
  volume       = {46},
  number       = {4},
  pages        = {549--627},
  year         = {2013},
  url          = {http://eudml.org/doc/272167},
  language     = {eng},
}

@misc{mukhina2025projectingdynamicalsystemssupport,
  author       = {Yulia Mukhina and Gleb Pogudin},
  title        = {Projecting dynamical systems via a support bound},
  year         = {2025},
  eprint       = {2501.13680},
  eprinttype   = {arXiv},
  eprintclass  = {cs.SC},
}

@article{Heintz1983,
  author       = {Heintz, Joos},
  title        = {Definability and fast quantifier elimination in algebraically closed fields},
  journal      = {Theoretical Computer Science},
  volume       = {24},
  number       = {3},
  pages        = {239--277},
  year         = {1983},
  doi          = {10.1016/0304-3975(83)90002-6},
}

@book{harris1992algebraic,
  author       = {Harris, Joe},
  title        = {Algebraic Geometry: A First Course},
  series       = {Graduate Texts in Mathematics},
  volume       = {133},
  publisher    = {Springer-Verlag, New York},
  year         = {1992},
}

@book{mumford1976algebraic,
  author       = {Mumford, David},
  title        = {Algebraic Geometry I: Complex Projective Varieties},
  series       = {Classics in mathematics},
  volume       = {221},
  publisher    = {Springer-Verlag, Berlin-New York},
  year         = {1976},
}

@article{GKO2016,
  author       = {Gustavson, R. and Kondratieva, M. and Ovchinnikov, A.},
  title        = {New effective differential {Nullstellensatz}},
  journal      = {Advances in Mathematics},
  volume       = {290},
  pages        = {1138--1158},
  year         = {2016},
  doi          = {10.1016/j.aim.2015.12.021},
  language     = {English},
}

@book{ritt1950differential,
  author       = {Ritt, Joseph Fels},
  title        = {Differential Algebra},
  series       = {American Mathematical Society Colloquium Publications},
  volume       = {33},
  publisher    = {American Mathematical Society},
  address      = {New York},
  year         = {1950},
}

@book{kolchin1973differential,
  author       = {Kolchin, Ellis R.},
  title        = {Differential Algebra and Algebraic Groups},
  publisher    = {Academic Press},
  address      = {New York},
  year         = {1973},
}

@article{Jelonek2005,
  author       = {Jelonek, Zbigniew},
  title        = {On the effective {Nullstellensatz}},
  journal      = {Inventiones Mathematicae},
  volume       = {162},
  number       = {1},
  pages        = {1--17},
  year         = {2005},
  doi          = {10.1007/s00222-004-0434-8},
}

@book{Comtet1974,
  author       = {Comtet, Louis},
  title        = {Advanced Combinatorics: The Art of Finite and Infinite Expansions},
  edition      = {1},
  publisher    = {D. Reidel Publishing Co. / Springer},
  address      = {Dordrecht, Holland},
  pages        = {xi+343},
  year         = {1974},
  doi          = {10.1007/978-94-010-2196-8},
}

@inproceedings{10.1007/978-3-032-09645-6_15,
  author       = {Mukhina, Yulia and Pogudin, Gleb},
  editor       = {Boulier, Fran{\c{c}}ois and Mou, Chenqi and Sadykov, Timur M. and Vorozhtsov, Evgenii V.},
  title        = {Support Bound for Differential Elimination in Polynomial Dynamical Systems},
  booktitle    = {Computer Algebra in Scientific Computing},
  publisher    = {Springer Nature Switzerland},
  address      = {Cham},
  pages        = {265--285},
  year         = {2026},
}

@inproceedings{10.1145/3747199.3747564,
  author       = {Mohr, Rafael and Mukhina, Yulia},
  title        = {On the Computation of {Newton} Polytopes of Eliminants},
  booktitle    = {Proceedings of the 2025 International Symposium on Symbolic and Algebraic Computation},
  series       = {ISSAC '25},
  publisher    = {Association for Computing Machinery},
  address      = {New York, NY, USA},
  pages        = {215--223},
  year         = {2025},
  doi          = {10.1145/3747199.3747564},
}

@inproceedings{Fliess1995Implicit,
  author       = {Michel Fliess and Jean L{\'e}vine and Philippe Martin and Pierre Rouchon},
  title        = {Implicit Differential Equations and Lie--B{\"a}cklund Mappings},
  booktitle    = {Proceedings of the 34th IEEE Conference on Decision and Control},
  address      = {New Orleans, Louisiana, USA},
  pages        = {2704--2709},
  year         = {1995},
}

@article{doi:10.1137/22M1469067,
  author       = {Dong, Ruiwen and Goodbrake, Christian and Harrington, Heather A. and Pogudin, Gleb},
  title        = {Differential Elimination for Dynamical Models via Projections with Applications to Structural Identifiability},
  journal      = {SIAM Journal on Applied Algebra and Geometry},
  volume       = {7},
  number       = {1},
  pages        = {194--235},
  year         = {2023},
  doi          = {10.1137/22M1469067},
}

@article{DALFONSO2014588,
  author       = {Lisi D’Alfonso and Gabriela Jeronimo and Pablo Solernó},
  title        = {Effective differential {Nullstellensatz} for ordinary {DAE} systems with constant coefficients},
  journal      = {Journal of Complexity},
  volume       = {30},
  number       = {5},
  pages        = {588--603},
  year         = {2014},
  doi          = {10.1016/j.jco.2014.01.001},
}

@incollection{Boulier+2007+109+138,
  author       = {François Boulier},
  editor       = {Markus Rosenkranz and Dongming Wang},
  title        = {Differential Elimination and Biological Modelling},
  booktitle    = {Gr{\"o}bner Bases in Symbolic Analysis},
  publisher    = {De Gruyter},
  address      = {Berlin, Boston},
  pages        = {109--138},
  year         = {2007},
  doi          = {10.1515/9783110922752.109},
}

@article{10.1093/imrn/rnaa302,
    author = {Ovchinnikov, Alexey and Pogudin, Gleb and Vo, Thieu N},
    title = {Bounds for Elimination of Unknowns in Systems of Differential-Algebraic Equations},
    journal = {International Mathematics Research Notices},
    volume = {2022},
    number = {16},
    pages = {12342-12377},
    year = {2022},
    month = {08},
    issn = {1073-7928},
    doi = {10.1093/imrn/rnaa302},

}

@article{Ovchinnikov2023,
  author  = {Ovchinnikov, Alexey and Pogudin, Gleb and Thompson, Peter},
  title   = {Parameter Identifiability and Input--Output Equations},
  journal = {Applicable Algebra in Engineering, Communication and Computing},
  year    = {2023},
  volume  = {34},
  number  = {2},
  pages   = {165--182},
  doi     = {10.1007/s00200-021-00486-8},
}

@article{Rueda2016,
 author = {Rueda, Sonia L.},
 title = {Differential elimination by differential specialization of {Sylvester} style matrices},
 fjournal = {Advances in Applied Mathematics},
 journal = {Adv. Appl. Math.},
 volume = {72},
 pages = {4--37},
 year = {2016},
  doi = {10.1016/j.aam.2015.07.002},
}

@incollection{Hubert2003,
 author = {Hubert, Evelyne},
 title = {Notes on triangular sets and triangulation-decomposition algorithms. {II}: {Differential} systems},
 booktitle = {Symbolic and numerical scientific computation. Second international conference, SNSC 2001, Hagenberg, Austria, September 12--14, 2001. Revised papers},
 pages = {40--87},
 year = {2003},
 publisher = {Berlin: Springer},
 doi = {10.1007/3-540-45084-x_2},
}

@article{DJS2006,
 author = {D'Alfonso, Lisi and Jeronimo, Gabriela and Solern{\'o}, Pablo},
 title = {On the complexity of the resolvent representation of some prime differential ideals},
 fjournal = {Journal of Complexity},
 journal = {J. Complexity},
 volume = {22},
 number = {3},
 pages = {396--430},
 year = {2006},
 doi = {10.1016/j.jco.2005.10.002},
}

@incollection{CarraFerro2007,
 author = {Carr{\`a} Ferro, Giuseppa},
 title = {A survey on differential {Gr{\"o}bner} bases},
 booktitle = {Gr\"obner bases in symbolic analysis. Based on talks delivered at the special semester on Gr\"obner bases and related methods, Linz, Austria, May 2006},
 pages = {77--108},
 year = {2007},
 publisher = {Berlin: Walter de Gruyter},
 }

@incollection{Grigorev1989,
 author = {Grigor'ev, D. Yu.},
 title = {Complexity of quantifier elimination in the theory of ordinary differential equations},
 booktitle = {EUROCAL '87. European Conference on Computer Algebra, Leipzig, GDR, June 2--5, 1987. Proceedings.},
 pages = {11--25},
 year = {1989},
 publisher = {Berlin etc.: Springer-Verlag},
 doi = {10.1007/3-540-51517-8_81},
}

@article{Sedoglavic2002,
 author = {Sedoglavic, Alexandre},
 title = {A probabilistic algorithm to test local algebraic observability in polynomial time},
 fjournal = {Journal of Symbolic Computation},
 journal = {J. Symb. Comput.},
 volume = {33},
 number = {5},
 pages = {735--755},
 year = {2002},
 doi = {10.1006/jsco.2002.0532},
}

@incollection{MRS2018,
 author = {Meshkat, Nicolette and Rosen, Zvi and Sullivant, Seth},
 title = {Algebraic tools for the analysis of state space models},
 booktitle = {The 50th anniversary of Gr\"obner bases. Proceedings of the 8th Mathematical Society of Japan-Seasonal Institute (MSJ-SI 2015), Osaka, Japan, July 1--10, 2015},
 pages = {171--205},
 year = {2018},
 publisher = {Tokyo: Mathematical Society of Japan (MSJ)},
 language = {English},
}

@article{JS2023,
 author = {Jeronimo, Gabriela and Solern{\'o}, Pablo},
 title = {Weak identifiability for differential algebraic systems},
 fjournal = {Advances in Applied Mathematics},
 journal = {Adv. Appl. Math.},
 volume = {147},
 pages = {31},
 note = {Id/No 102519},
 year = {2023},
 doi = {10.1016/j.aam.2023.102519},
 }

@book{Ritt1932,
 author = {Ritt, Joseph Fels},
 title = {Differential equations from the algebraic standpoint},
 fseries = {Colloquium Publications. American Mathematical Society},
 series = {Colloq. Publ., Am. Math. Soc.},
 volume = {14},
 year = {1932},
 publisher = {American Mathematical Society (AMS), Providence, RI},
 }

@incollection{Boulier-etal1995,
 author = {Boulier, Fran{\c{c}}ois and Lazard, Daniel and Ollivier, Fran{\c{c}}ois and Petitot, Michel},
 title = {Representation for the radical of a finitely generated differential ideal},
 booktitle = {Proceedings of the 1995 international symposium on symbolic and algebraic computation, ISSAC '95, Montreal, Canada, July 10--12, 1995},
 pages = {158--166},
 year = {1995},
 publisher = {New York, NY: ACM Press},
}

@article{Li-etal2015,
 author = {Li, Wei and Yuan, Chun-Ming and Gao, Xiao-Shan},
 title = {Sparse differential resultant for {Laurent} differential polynomials},
 fjournal = {Foundations of Computational Mathematics},
 journal = {Found. Comput. Math.},
 volume = {15},
 number = {2},
 pages = {451--517},
 year = {2015},
 doi = {10.1007/s10208-015-9249-9},
}

\end{document}